\title[On paired roots systems of Coxeter groups]{On paired root systems of Coxeter groups}
\author{Fu, Xiang}
\dedicatory{\upshape
School of Mathematics and Statistics\\
University of Sydney, NSW 2006, Australia\\[.5em]
\texttt{xifu9119@mail.usyd.edu.au}\\
\texttt{xiangf@maths.usyd.edu.au}\\[1em]
Preliminary version,
\today
}
\newtheorem{theorem}{Theorem}[section]
\newtheorem{lemma}[theorem]{Lemma}
\newtheorem{proposition}[theorem]{Proposition}
\theoremstyle{definition}
\newtheorem{definition}[theorem]{Definition}
\newtheorem{remark}[theorem]{Remark}
\newtheorem{notation}[theorem]{Notation}
\newtheorem*{notn}{Notation}
\numberwithin{equation}{section}
\newcommand{\N}{\mathbb{N}}
\newcommand{\R}{\mathbb{R}}
\DeclareMathOperator{\PLC}{PLC}
\DeclareMathOperator{\spa}{span}
\DeclareMathOperator{\GL}{GL}
\DeclareMathOperator{\ord}{ord}
\subjclass[2010]{20F55 (20F10, 20F65)}
\keywords{Coxeter groups, reflection groups, Kac-Moody Lie algebras, root systems}
\begin{document}

\begin{abstract}
This paper examines a systematic method to construct a pair of (inter-related)
root systems for arbitrary Coxeter groups from a class of non-standard geometric 
representations. This method can be employed to construct generalizations
of root systems for a large family of linear groups generated by involutions. We then give a characterization
of Coxeter groups, among these groups, in terms of such paired 
root systems. Furthermore, we use this method to construct and study the paired root
systems for reflection subgroups of Coxeter groups.

\end{abstract}

\maketitle

\section{Introduction}

A \emph{Coxeter group} $W$ is an abstract group generated by a set of 
involutions $R$, called its \emph{Coxeter generators}, subject only to 
certain braid relations. Despite the simplicity of this definition, there
is a rich theory for Coxeter groups with non-trivial applications in a 
multitude of areas of mathematics and physics. When studying Coxeter groups, 
one of the most powerful tools we have at our disposal is the notion of 
\emph{root systems}. In classical literature (\cite[Ch.V, \S 4]{BN68} or \cite[\S 5.3--5.4]{HM}, for
example), the root system of a Coxeter group $W$ is a geometric construction 
arising from the \emph{Tits representation} of $W$. The Tits representation 
of $W$ is an embedding of $W$ into the orthogonal group of a certain bilinear 
form on a suitably chosen vector space $V$ subject to the requirement that
the $W$-conjugates of elements of $R$ are mapped to reflections with respect to 
certain hyperplanes in $V$. In the case that $W$ is finite, $V$ is Euclidean (of dimension 
equal to the cardinality of $R$), and 
the root system of $W$ simply consists of representative 
normal vectors for these hyperplanes. Those elements of the root system 
corresponding to the elements of $R$ are known as \emph{simple roots}, and in most
classical literature (\cite[Ch.V, \S 4]{BN68} and \cite[\S 5.3]{HM}, for example), the simple roots are
linearly independent. 

Similar constructions
of root systems can be extended to infinite Coxeter groups and Kac-Moody Lie algebras. However, the 
actual constructions of root systems differ depending whether the root systems are 
associated to Kac-Moody Lie algebras or infinite Coxeter groups. As discussed in 
the introduction of \cite{CH11}, while all definitions of root systems are related
to a given bilinear form, the actual bilinear forms considered in the case of 
Kac-Moody Lie algebras are different from the ones in Coxeter groups. 
Furthermore, it is well known (\cite{VD82} and \cite[Ch.3]{MD87})
that within an arbitrary Coxeter group $W$, all of its reflection subgroups are
themselves Coxeter groups, but in the literature (\cite{MD87} 
or \cite{MD90}, for example), the construction of the root systems corresponding to such 
reflections subgroups as subsets of the root system of $W$ requires special care. In particular,
since a proper reflection subgroup may have strictly more Coxeter generators than the over-group 
(as seen in \cite[Example 5.1]{CH11}), 
the equivalent of the simple roots in these root systems need not be linearly independent, 
making the overall theory of root systems and root bases somewhat less uniform.
Consequently, it seems profitable to 
develop a universal method for constructing root systems that is applicable to 
arbitrary Coxeter groups and their reflection subgroups, as well as 
to objects like Kac-Moody Lie algebras (in fact, to all groups with a  so-called 
\emph{root group datum}, as surveyed in \cite{PR09}).

In \cite{MP89}, \cite{BC01}, \cite{BC06} and \cite{PR09}, a number of more general 
notions of root systems have been studied. In these approaches, 
a pair of root systems are constructed in two vector spaces which are essentially algebraic duals
of each other (apart from \cite{MP89}, the two vector spaces involved are explicitly required to
be algebraic duals of each other, whereas in \cite{MP89} the two vector spaces are linked by a non-degenerate
bilinear pairing satisfying certain integrality conditions).   

Recently, an approach
taken in \cite{FU0} and \cite{FU2} generalizes those of \cite{MP89}, \cite{BC01}, \cite{BC06} and \cite{PR09}. 
In this approach, for an arbitrary Coxeter group, a pair of root systems are
constructed in two vector spaces linked only by a bilinear pairing which does not require 
the non-degeneracy and integrality conditions of \cite{MP89}. This particular approach
allows more abstract geometry in the two root systems to take place (for example, the two
representation spaces need not be algebraic duals of each other), whilst 
providing a unified theory of root systems, especially with respect to reflection subgroups
(this last point is to be established in Section~\ref{sec:reflection} of this present paper).   

In this paper, we present a few results demonstrating the 
``universalness'' of  the notion of root systems in \cite{FU0} and \cite{FU2}. 
In fact, this new approach applies to a large family of linear groups  
generated by involutions, and one of the key results of this paper (Theorem \ref{th:key}) 
shows that these groups are Coxeter groups only if the corresponding root systems 
decompose as disjoint unions of those roots generalizing the classical
concept of \emph{positive roots} and those roots generalizing the classical concept of
\emph{negative roots}. In fact, this result provides an alternative characterization for 
Coxeter groups, since it is well known that for any Coxeter group we may construct 
a root system that decomposes in the same way. This alternative characterization is 
implicitly suggested in the work of Matthew Dyer (\cite{MD87}), and we are very grateful 
to Prof.~Dyer for a large number of helpful suggestions leading to the development of 
this generalized notion of root systems.

The main body of this paper is organized into 2 sections, namely, 
Section~\ref{sec:prr} and Section~\ref{sec:reflection}. In Section~\ref{sec:prr}
we develop a notion of root systems applicable to a large family of groups that are generated 
only by involutions, and we investigate when these root systems may decompose into
disjoint unions of the so-called \emph{positive roots} and the so-called
\emph{negative roots}, and we prove that such groups are Coxeter groups only if
such decompositions take place (Theorem~\ref{th:datum} and Theorem~\ref{th:key}). 
In Section~\ref{sec:reflection} we prove that the 
notion of root systems in \cite{FU0} and \cite{FU2} applies to all the reflection 
subgroups of any Coxeter group. In particular, we give a geometric characterization
of the roots that correspond to the Coxeter generators of reflection subgroups (Proposition~\ref{pp:can1}
and Proposition~\ref{pp:d3.4}), and we show that these characterizations are precisely those
allowing these roots to be \emph{the simple roots} for the root systems of such reflection subgroups
within the root systems of the respective over-groups.  
\begin{notn}
If $A$ is a subset of a real vector space then we define the
\emph{positive linear cone} of $A$, denoted $\PLC(A)$, to be the set
$$
\{\, \sum_{a\in A}c_a a\mid 
              \text{$c_a \geq 0$ for all $a\in A$, and $c_{a'}>0$ for some $a'\in A$ } \,\}.
$$ 
Furthermore, we define $-A:=\{\,-v\mid v\in A\,\}$. Also, if $B$ is a subset of 
a group $G$ then $\langle B \rangle$ denotes the subgroup of $G$ generated by $B$.
\end{notn}

\section{Decomposition of Root Systems and Coxeter Datum}
\label{sec:prr}

Let $V_1$ and $V_2$ be vector spaces over the real field $\R$ equipped with a bilinear
pairing $\langle\,,\,\rangle\colon V_1\times V_2 \to \R$. Let $S$ be an indexing set, 
and suppose that
 $\Pi_1 :=\{\, \alpha_s\mid s\in S\, \}\subseteq V_1$ and
 $\Pi_2 := \{\, \beta_s\mid s\in S\, \}\subseteq V_2$
are both in bijective correspondence with $S$. Further, suppose that $\Pi_1$ and 
$\Pi_2$ satisfy the following conditions:
\begin{itemize}
 \item [(D1)] $\langle\alpha_s, \beta_s\rangle =1$ for all $s\in S$;
 \item [(D2)] 
 \begin{itemize}
 \item[(i)] $0\notin\PLC(\Pi_1)$ and $0\notin\PLC(\Pi_2)$.
 \item[(ii)] $\alpha_s\notin\PLC(\Pi_1\setminus\{\alpha_s\})$ and 
$\beta_s\notin\PLC(\Pi_2\setminus\{\beta_s\})$ for each $s\in S$.
\end{itemize}
\end{itemize}
Observe that condition (D2) (i) implies that $\alpha_s\notin\PLC(-\Pi_1\setminus\{-\alpha_s\})$ and 
$\beta_s\notin\PLC(-\Pi_2\setminus\{-\beta_s\})$ for each $s\in S$.
We remark that there do exist examples for which $\Pi_1$
(resp.~$\Pi_2$) is linearly dependent, in which case necessarily
some $\alpha_s$ (resp.~$\beta_s$) will be expressible as a linear
combination of $\Pi_1\setminus\{\alpha_s\}$ (resp.
 $\Pi_2\setminus\{\beta_s\}$) with coefficients of mixed signs.
\begin{definition}
\label{def:start}
For $s\in S$, define $\rho_1(s)\in \GL(V_1)$ and $\rho_2(s)\in \GL(V_2)$ 
by the rules
\begin{align*}
 \rho_1(s)(x)&:=x-2\langle x, \beta_s\rangle \alpha_s\\
\noalign{\hbox{for all $x\in V_1$, and }}
 \rho_2(s)(y)&:=y-2\langle \alpha_s, y \rangle \beta_s
\end{align*}
for all $y\in V_2$. Further, we define, for each $i\in \{\,1, 2\,\}$: 
\begin{align*}
 R_i &:=\{\,\rho_i(s)\mid s\in S\,\};\\
 W_i &:=\langle R_i\rangle; \\
 \Phi_i &:= W_i \Pi_i;\\
 \Phi^+_i & := \Phi_i\cap \PLC(\Pi_i);\\
\noalign{\hbox{and}}
 \Phi^-_i & :=-\Phi^+_i.
\end{align*}
For each $i\in \{1, 2\}$, and for each $s\in S$, we call $\rho_i(s)$ the 
\emph{reflections} corresponding to $s$ in $V_i$.  
We call $\Phi_i$ the \emph{root system} for the \emph{Weyl group} $W_i$ 
realized in $V_i$, and we call $\Pi_i$ the set of \emph{simple roots} in $\Phi_i$. 
Furthermore, we call $\Phi^+_i$
the set of \emph{positive roots} in $\Phi_i$ and $\Phi^-_i$ the set of 
\emph{negative roots} in $\Phi_i$.
\end{definition}

\begin{remark}
For each $i\in \{1, 2\}$ and each $s\in S$ note that  $\rho_i(s)$ is an involution with a
$-1$-eigenvector of multiplicity $1$.
Furthermore, it is a consequence of condition (D2) that 
$\Phi_i^+\cap \Phi_i^-=\emptyset$. Use $\uplus$ to denote disjoint unions, we have: 
\end{remark}

\begin{theorem}
 \label{th:datum}
Given conditions (D1) and (D2), the following are equivalent\textup{:}
\begin{itemize}
\item[(i)] $\Phi_1=\Phi^+_1\uplus \Phi^-_1$.

\item[(ii)] $\Phi_2=\Phi^+_2\uplus \Phi^-_2$.

\item[(iii)] For all $s, t\in S$ the following three conditions are satisfied\textup{:}
\begin{itemize}
 \item [\quad(D3)] $\langle \alpha_s, \beta_t\rangle\leq 0$ and $\langle \alpha_t, \beta_s\rangle \leq 0$
              whenever $s\neq t$.
 \item [\quad(D4)] $\langle \alpha_s, \beta_t\rangle =0$ if and only if $\langle \alpha_t, \beta_s\rangle=0$. 
 \item [\quad(D5)] Either $\langle \alpha_s, \beta_t\rangle \langle \alpha_t, \beta_s\rangle =\cos^2\frac{\pi}{m_{st}}$
              for some integer $m_{st}\geq 2$, or else
              $\langle \alpha_s, \beta_t\rangle \langle \alpha_t, \beta_s\rangle\geq 1$. 
\end{itemize}
\end{itemize}
\end{theorem}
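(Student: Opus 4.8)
The plan is to establish the equivalences in the cycle $(iii)\Rightarrow(i)$, $(i)\Leftrightarrow(ii)$ by the symmetry of the setup, and $(i)\Rightarrow(iii)$. The symmetry observation comes first: the roles of $V_1$ and $V_2$ are interchangeable (swap $\langle\,,\,\rangle$ for its transpose, swap $\Pi_1\leftrightarrow\Pi_2$, $\rho_1(s)\leftrightarrow\rho_2(s)$), and conditions (D1)--(D5) are all symmetric under this swap; hence $(i)$ and $(ii)$ are equivalent once either of the remaining implications is proved, and it suffices to prove $(iii)\Rightarrow(i)$ and $(i)\Rightarrow(iii)$.

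For $(iii)\Rightarrow(i)$, I would run the classical exchange-condition argument of Tits, adapted to the one-sided pairing. The key lemma is: if $w\in W_1$ and $s\in S$ with $w\alpha_s\in\Phi_1^-$, then $\ell(ws)<\ell(w)$ (length relative to $R_1$), and more precisely one proves by induction on $\ell(w)$ that for every $w\in W_1$, $w\alpha_s$ lies in $\Phi_1^+$ or in $\Phi_1^-$ for each $s$, i.e. $w$ maps $\Pi_1$ into $\Phi_1^+\uplus\Phi_1^-$. The base case $\ell(w)=0$ is trivial. For the inductive step write $w=w's'$ with $\ell(w')<\ell(w)$; the crux is the $s=t$-free, two-generator computation showing that the subgroup $\langle\rho_1(s),\rho_1(t)\rangle$ acts on $\spa\{\alpha_s,\alpha_t\}$ as a (possibly infinite) dihedral group, with every element of its orbit of $\alpha_s,\alpha_t$ lying in $\PLC\{\alpha_s,\alpha_t\}\cup-\PLC\{\alpha_s,\alpha_t\}$. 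This is exactly where (D3), (D4) and the dichotomy (D5) are used: (D5) guarantees that $\rho_1(s)\rho_1(t)$ has either finite order $m_{st}$ (the $\cos^2(\pi/m_{st})$ case, giving a finite dihedral orbit of $2m_{st}$ positive/negative roots) or infinite order (the $\geq 1$ case, giving the "affine or hyperbolic" dihedral picture), while (D3)--(D4) pin down the signs so the orbit stays inside the positive cone up to global sign. Granting this rank-$2$ fact, the standard deletion argument (if $w\alpha_t\in\Phi_1^+$ but $ws'\alpha_t\in\Phi_1^-$ for some reduced expression, compare with an interior occurrence of $s'$) shows that no root of $\Phi_1=W_1\Pi_1$ can escape $\PLC(\Pi_1)\cup-\PLC(\Pi_1)$; combined with the Remark that $\Phi_1^+\cap\Phi_1^-=\emptyset$, this gives $(i)$.

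For $(i)\Rightarrow(iii)$, I would argue contrapositively: assume one of (D3), (D4), (D5) fails for some pair $s\neq t$ and produce a root in $\Phi_1\setminus(\Phi_1^+\cup\Phi_1^-)$, necessarily inside $\spa\{\alpha_s,\alpha_t\}$. Set $a=\langle\alpha_s,\beta_t\rangle$, $b=\langle\alpha_t,\beta_s\rangle$ and examine the action of $\langle\rho_1(s),\rho_1(t)\rangle$ on $\R\alpha_s+\R\alpha_t$ via the $2\times 2$ matrix of $\rho_1(s)\rho_1(t)$; its trace is $2-2ab$ (up to normalization) hence the dynamics are governed by $ab$ and by the signs of $a,b$. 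If (D3) fails, say $a>0$, then $\rho_1(s)\alpha_t=\alpha_t-b\,\cdots$ — more to the point, a short explicit computation of $\rho_1(t)\alpha_s$ or $\rho_1(s)\rho_1(t)\alpha_s$ exhibits a vector $c\alpha_s+d\alpha_t$ with $c,d$ of strictly opposite signs, which by (D2)(ii) cannot lie in $\pm\PLC(\Pi_1)$. If (D4) fails (say $a<0=b$), a similar one- or two-step computation again yields mixed-sign coefficients. If (D3), (D4) hold but (D5) fails, i.e. $0<ab<1$ with $ab\neq\cos^2(\pi/m)$ for every integer $m\geq 2$, then $\rho_1(s)\rho_1(t)$ is a rotation by an irrational multiple of $\pi$ on the definite form $\spa\{\alpha_s,\alpha_t\}$, so its orbit of $\alpha_s$ is dense on an ellipse and in particular meets the open region of mixed-sign coefficients; pick such an element. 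In all three cases we have exhibited a root that is neither positive nor negative, contradicting $(i)$.

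The main obstacle is the rank-$2$ analysis underpinning $(iii)\Rightarrow(i)$: with only a one-sided, possibly degenerate pairing there is no ambient inner product to appeal to, so one must verify by hand that on $U=\spa\{\alpha_s,\alpha_t\}$ the pair $(\rho_1(s)|_U,\rho_1(t)|_U)$ really does generate a dihedral group of the expected order and that the $\alpha_s,\alpha_t$-orbit stays in $\PLC\{\alpha_s,\alpha_t\}\cup-\PLC\{\alpha_s,\alpha_t\}$ — handling uniformly the finite case, the $ab=1$ "affine" case, and the $ab>1$ case, and checking that $\Pi_1$ being possibly linearly dependent causes no collapse (if $\alpha_s,\alpha_t$ are proportional one checks $m_{st}$ is forced to make $\rho_1(s)=\rho_1(t)$ consistent, a degenerate but harmless situation). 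Once this local picture is secured, propagating it to all of $\Phi_1$ is the familiar induction on length.
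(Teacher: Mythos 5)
Your overall architecture matches the paper's for the direction (i)$\Rightarrow$(iii): compute $\rho_1(t)\alpha_s$ and a two-step image, inspect coefficients against (D2), and reduce (D5) to a rank-two dihedral computation on $\R\alpha_s+\R\alpha_t$. (One small caution: ruling out a mixed-sign vector $c\alpha_s+d\alpha_t$ needs both halves of (D2), since $\Pi_1$ may be linearly dependent; the paper's two-case argument with $1-c_s>0$ versus $1-c_s\leq 0$ is exactly what your appeal to ``(D2)(ii)'' is eliding.) For (iii)$\Rightarrow$(i) the paper simply cites that (D1)--(D5) constitute a Coxeter datum and invokes Lemma 3.2 of the reference \cite{FU2}, whereas you propose to reprove that lemma via the classical Tits/exchange induction; that is a legitimate, more self-contained route, but you explicitly defer its hard rank-two core, so as written it is a sketch rather than a proof.

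The genuine gap is in your treatment of the failure of (D5). You claim that if $0<ab<1$ with $ab\neq\cos^2(\pi/m)$ for all integers $m\geq 2$, then $\rho_1(s)\rho_1(t)$ restricted to $\spa\{\alpha_s,\alpha_t\}$ is a rotation by an \emph{irrational} multiple of $\pi$, so that density of the orbit on an ellipse produces a mixed-sign root. This dichotomy is false: writing $ab=\cos^2\theta$ with $\theta\in(0,\pi/2)$, the excluded set is only $\{\theta=\pi/m\}$, and there remain rational angles such as $\theta=2\pi/5$ (so $ab=\cos^2(2\pi/5)$, which is not of the form $\cos^2(\pi/m)$ for any $m$). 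There the product of the two reflections has finite order and the orbit of $\alpha_s$ is finite, so no density argument is available; yet the conclusion still holds because the coefficients of the orbit are the Chebyshev-type quantities $p_n=\sin(n\theta)/\sin\theta$, and choosing $m$ maximal with $m\theta\leq\pi$ one gets $p_m>0$ and $p_{m+1}<0$ when $m\theta\neq\pi$. This explicit sign analysis is precisely Proposition~\ref{pp:pn}(i) (Dyer's lemma), which the paper uses at this point via the matrices $A$, $B$ and the equivalence of $\Phi_1=\Phi_1^+\uplus\Phi_1^-$ on the rank-two orbit with $p_np_{n+1}\geq 0$ for all $n$. You need to replace the density step by this computation (which, incidentally, also supplies the rank-two fact you flagged as the main obstacle in the other direction).
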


It is a consequence of this theorem that if any of the equivalent conditions in it is satisfied
then $W_1$ and $W_2$ are isomorphic Coxeter groups.
To prove this theorem we shall need a few technical results first. These are essentially taken from 
\cite{MD87}, and for completeness, the relevant proofs are included here.

Let $\mathscr{A}$ be a commutative $\R$-algebra, let $q^{1/2}$ and $X$ be units of $\mathscr{A}$, and let 
$\gamma \in \R$. Define $A$, $B$ to be $2\times 2$ matrices over $\mathscr{A}$ given by
\begin{equation*}
 A =\begin{pmatrix}
-1 & 2 \gamma q^{1/2}X\\
0 & q 
    \end{pmatrix}
\qquad\qquad
B=\begin{pmatrix}
q & 0 \\
2\gamma q^{1/2}X^{-1} & -1   
  \end{pmatrix}.
\end{equation*}
It is easily proved by induction on $n\in \N$ that 
\begin{align}
\label{eq:MA1}
 B(AB)^n &=
\begin{pmatrix}
q^{n+1}p_{2n+1} & -q^{n+\tfrac{1}{2}}p_{2n}X \\
q^{n+\tfrac{1}{2}}p_{2n+2}X^{-1} & -q^{n}p_{2n+1} 
\end{pmatrix}\\
\label{eq:MA1a}
A(BA)^n &=
\begin{pmatrix}
-q^{n}p_{2n+1} & q^{n+\tfrac{1}{2}}p_{2n+2}X \\
-q^{n+\tfrac{1}{2}}p_{2n}X^{-1} & q^{n+1}p_{2n+1} 
\end{pmatrix}\\
\label{eq:MA2a}
(BA)^n & =
\begin{pmatrix}
 -q^n p_{2n-1} & q^{n+\tfrac{1}{2}}p_{2n}X \\
-q^{n-\tfrac{1}{2}}p_{2n}X^{-1} & q^n p_{2n+1}
\end{pmatrix}\\
\noalign{\hbox{and}}
\label{eq:MA2}
(AB)^n & =
\begin{pmatrix}
 q^n p_{2n+1} & -q^{n-\tfrac{1}{2}}p_{2n}X \\
q^{n+\tfrac{1}{2}}p_{2n}X^{-1} & -q^n p_{2n-1}
\end{pmatrix}
\end{align}
where $p_n \in \R$ ($n \in \{-1\} \cup \N$) are defined recursively by 
\begin{equation}
 \label{eq:MA3}
p_{-1} = -1, \qquad p_0 =0, \qquad p_{n+1} = 2\gamma p_n -p_{n-1} \quad \text{($n\in \N$)}.
\end{equation}
The solutions of the recurrence equation (\ref{eq:MA3}) is 
\begin{equation}
 \label{eq:MA4}
p_n =
\begin{cases}
 n &\gamma =1 \\
(-1)^{n+1}n&\gamma =-1 \\[5pt]
\dfrac{\sinh n\theta}{\sinh \theta}\quad\text{(where $\theta=\cosh^{_1} \gamma$)}&|\gamma|>1\\[10pt]
\dfrac{\sin n\theta}{\sin\theta} \quad \text{(where $\theta = \cos^{-1}\gamma$)} &|\gamma|<1.
\end{cases}
\end{equation} 

\begin{proposition}\textup{(\cite[Lemma 2.2]{MD87})}
 \label{pp:pn}
Keep all the above notation.

\noindent(i)\quad Conditions (1) and (2) below are equivalent:
\begin{itemize}
 \item [(1)] $p_n p_{n+1}\geq 0$ for all $n\in \N$; 
 \item [(2)] $\gamma\in \{\,\cos\frac{\pi}{m}\mid m\in \N, m\geq 2\,\}\cup [1, \infty)$.
\end{itemize}

\noindent(ii)\quad If $\gamma=\cos\frac{k\pi}{m}$ for some $k, m\in \N$ with 
              $0<k<m$ then the matrices $A$ and $B$ satisfies the equation
              $$ABA\cdots =BAB\cdots$$
              where there are $m$ factors on each side.

\noindent(iii)\quad If $q=1$ then the matrix $AB$ has order $m$ if $\gamma=\cos\frac{k\pi}{m}$
                    for some $k, m\in \N$ with $0<k<m$ and $\gcd(m,k)=1$, and the matrix $AB$ has infinite 
                    order otherwise. 
\end{proposition}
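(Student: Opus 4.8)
The plan is to treat the three parts in turn, each time reducing everything to the closed form~\eqref{eq:MA4} for $p_n$ together with the matrix identities~\eqref{eq:MA1}--\eqref{eq:MA2}. As a preliminary I would record that \eqref{eq:MA4} is the usual solution of the recurrence~\eqref{eq:MA3}, whose characteristic polynomial is $x^2-2\gamma x+1$: the four cases correspond to the repeated root $1$, the repeated root $-1$, a reciprocal pair of real roots, and a conjugate pair of unimodular roots, the listed formulas being forced by $p_{-1}=-1$ and $p_0=0$. I would also note, for later use, that $p_1=2\gamma p_0-p_{-1}=1$ and $p_2=2\gamma$, so $p_1p_2<0$ whenever $\gamma<0$.

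For part~(i), the implication $(2)\Rightarrow(1)$ is a sign check on \eqref{eq:MA4}: if $\gamma\ge1$ then $p_n\ge0$ for all $n$ (using $p_n=n$ or $p_n=\sinh(n\theta)/\sinh\theta$ with $\theta>0$); and if $\gamma=\cos\frac{\pi}{m}$ with $m\ge2$ then $p_n=\sin\frac{n\pi}{m}/\sin\frac{\pi}{m}$, whose sign depends only on $\lfloor n/m\rfloor$ and which vanishes exactly when $m\mid n$, so two consecutive terms never have strictly opposite sign. For $(1)\Rightarrow(2)$ I argue by contraposition. By the preliminary remark we may assume $0\le\gamma<1$ and write $\gamma=\cos\theta$ with $\theta\in(0,\pi/2]$. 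If $\theta/\pi$ is irrational, Kronecker's density theorem produces an $n\ge1$ with $n\theta\bmod\pi\in(\pi-\theta,\pi)$, whence $\sin(n\theta)$ and $\sin((n+1)\theta)$ are nonzero of opposite signs and $p_np_{n+1}<0$. If $\theta/\pi=k/m$ in lowest terms with $k\ge2$ (the value $k=1$ being precisely the case kept in (2)), then, since $\gcd(k,m)=1$, there is an $n\in\{1,\dots,m-1\}$ with $nk\equiv m-1\pmod m$, and because $0<k<m$ this places a multiple of $m$ strictly between $nk$ and $nk+k$, i.e.\ a multiple of $\pi$ strictly between $n\theta$ and $(n+1)\theta$, giving $p_np_{n+1}<0$ again.

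For part~(ii), writing $m=2n$ (resp.\ $m=2n+1$) identifies the left-hand product $ABA\cdots$ with $(AB)^n$ (resp.\ $A(BA)^n$) and the right-hand product $BAB\cdots$ with $(BA)^n$ (resp.\ $B(AB)^n$). When $\gamma=\cos\frac{k\pi}{m}$ with $0<k<m$ one has $\sin\frac{k\pi}{m}\ne0$, hence $p_m=0$, $p_{m+1}=(-1)^k$ and $p_{m-1}=(-1)^{k+1}$; substituting these into \eqref{eq:MA1}--\eqref{eq:MA2} annihilates the off-diagonal (resp.\ diagonal) entries and yields $(AB)^n=(-1)^kq^nI=(BA)^n$ in the even case and a common antidiagonal value $A(BA)^n=B(AB)^n$ in the odd case. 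For part~(iii), set $q=1$; then by \eqref{eq:MA2} the equation $(AB)^n=I$ forces $p_{2n}=0$ from the off-diagonal entries, and by \eqref{eq:MA4} a term $p_j$ with $j\ge1$ can vanish only when $|\gamma|<1$ and $\cos^{-1}\gamma=k\pi/m$ is a rational multiple of $\pi$ (in lowest terms), in which case $p_j=0\iff m\mid j$. Conversely, when $2n=jm$ the substitutions of part~(ii) give $(AB)^n=(-1)^{jk}I$, so $(AB)^n=I$ exactly when $m\mid2n$ and $(2n/m)k$ is even; a short parity argument (when $m$ is even, $k$ is odd because $\gcd(k,m)=1$, so $(AB)^{m/2}=-I\ne I$) then shows the least such $n$ is $n=m$. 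In all remaining cases $p_{2n}\ne0$ for every $n\ge1$, so no power of $AB$ equals $I$ and $AB$ has infinite order.

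I expect essentially all of the genuine work to sit in the direction $(1)\Rightarrow(2)$ of part~(i): once \eqref{eq:MA4} and \eqref{eq:MA1}--\eqref{eq:MA2} are in hand, parts (ii) and (iii) and the converse half of (i) are bookkeeping, whereas ruling out $\gamma=\cos\frac{k\pi}{m}$ with $k\ge2$ rests on the elementary but slightly fiddly fact that, because $\gcd(k,m)=1$, the residues $nk\bmod m$ exhaust $\Z/m\Z$, so one can trap a multiple of $m$ in an interval of length $k<m$; the irrational subcase likewise needs a standard density input rather than a purely algebraic manipulation.
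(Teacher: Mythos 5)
Your proof is correct, and parts (ii) and (iii) follow essentially the same computation as the paper's (substituting $p_{m-1}$, $p_m$, $p_{m+1}$ into \eqref{eq:MA1}--\eqref{eq:MA2}); in fact your part (iii) is slightly \emph{more} careful than the paper's, since you explicitly dispose of the candidate $n=m/2$, where $p_{2n}=p_m=0$ but $(AB)^{m/2}=-I$ because $k$ must be odd when $m$ is even --- the paper's appeal to ``$p_n\neq 0$ for $0<n<m$'' glosses over the fact that \eqref{eq:MA2} expresses $(AB)^n$ in terms of $p_{2n}$ and $p_{2n\pm 1}$ rather than $p_n$. The genuine divergence is in the forward direction of (i). After the same reduction to $0\leq\gamma<1$ via $p_2=2\gamma$, the paper argues directly: writing $\gamma=\cos\theta$ with $\theta\in(0,\pi/2]$, it takes $m$ to be the largest integer with $m\theta\leq\pi$ (so $m\geq 2$) and observes that if $m\theta\neq\pi$ then $\pi<(m+1)\theta<2\pi$, whence $p_m>0>p_{m+1}$, contradicting (1); hence $\theta=\pi/m$. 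You instead argue by contraposition and split into the irrational case (invoking Kronecker density) and the rational case $\theta=k\pi/m$ with $k\geq 2$ (a residue argument modulo $m$). Both routes work --- your trapping of a multiple of $m$ strictly inside $(nk,\,nk+k)$ is sound, and the endpoints are indeed not multiples of $m$ --- but the paper's single ``largest multiple of $\theta$ not exceeding $\pi$'' argument is more elementary, needs no density input, and treats both of your subcases uniformly.
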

\begin{proof}
(i):\quad First assume that (1) holds. Observe that (\ref{eq:MA3}) yields that $p_1=1$ and $p_2=2 \gamma$, 
hence $\gamma\geq 0$. Since (2) obviously holds if $\gamma\geq 1$, we may assume that 
$0\leq \gamma<1$. Choose $\theta$ so that $0<\theta\leq \frac{\pi}{2}$ and $\cos\theta=\gamma$, and 
let $m$ be the largest integer such that 
$$0<\theta< 2\theta<\cdots< m\theta\leq \pi.$$
Note that $m\geq 2$. Now if $m\theta\neq \pi$ then $\pi<(m+1)\theta<2\pi$, and in view of (\ref{eq:MA4})
we have $p_m=\frac{\sin m\theta}{\sin \theta}>0$, whereas $p_{m+1}=\frac{\sin(m+1)\theta}{\sin\theta}<0$, 
contradicting (1). Hence $m\theta=\pi$ and $\gamma=\cos\frac{\pi}{m}$ for some integer $m\geq 2$, whence (2) 
holds as desired. Conversely, if (2) holds then it follows immediately from (\ref{eq:MA4}) that (1) holds.

\noindent(ii):\quad If $m=2r$ is even then our task is to prove that $(AB)^r=(BA)^r$. It follows from 
(\ref{eq:MA4}) that $p_n=\frac{\sin(n k \pi/{2r})}{\sin(k \pi/{2r})}$, and hence, $p_{2r+1}=(-1)^k$
and $p_{2r-1}=(-1)^{k+1}$, while $p_{2r}=0$. Then it follows from (\ref{eq:MA2a}) and (\ref{eq:MA2}) that
$(AB)^r=(BA)^r$.

If $m=2r+1$ is odd then our task is to prove that $B(AB)^r=A(BA)^r$. In this case we find from (\ref{eq:MA4})
that $p_{2r+1}=0$, while $p_{2r+2}=(-1)^k$ and $p_{2r}=(-1)^{k+1}$, and then the required result follows 
immediately from (\ref{eq:MA1a}) and (\ref{eq:MA1}).

\noindent(iii):\quad If $\gamma=\cos\frac{k\pi}{m}$ then it follows immediately from (ii) above that 
$(AB)^m=1$, because $A^2=B^2=1$ when $q=1$. If $0<n<m$ and $\gcd(k,m)=1$, 
then (\ref{eq:MA4}) yields that
$p_n=\frac{\sin(nk\pi/m)}{\sin(k\pi/m)}\neq 0$, and it then follows from (\ref{eq:MA2}) that
$(AB)^n\neq 1$, proving that $AB$ has order $m$. On the other hand, if $\gamma$ 
is of any other form then it follows 
from (\ref{eq:MA4}) that $p_n\neq 0$ for all integer $n>0$. Then it is clear from (\ref{eq:MA2})  that 
$(AB)^n\neq 1$ for all such $n$, proving that $AB$ has infinite order.
\end{proof}
 
Now we are ready to prove Theorem \ref{th:datum}.

\begin{proof}[Proof of Theorem \ref{th:datum}]
We give a proof that (i) is equivalent to (iii). An entirely similar argument shows that (ii) and
(iii) are also equivalent.

First we show that (iii) implies (i). Given conditions (D3), (D4) and (D5) of the present paper we observe that 
$\mathscr{C}:=(\, S, V_1, V_2, \Pi_1, \Pi_2, \langle, \rangle\,)$ 
forms a Coxeter datum in the sense of \cite{FU2}, and hence (i) follows immediately from 
Lemma 3.2 of \cite{FU2}.

Conversely, suppose that $\Phi_1=\Phi^+_1\uplus \Phi^-_1$. We first prove that (D3) holds. Let $s, t\in S$ be distinct. 
By definition we have 
\begin{equation}
 \label{eq:c1}
\rho_1(t)\alpha_s=\alpha_s-2\langle \alpha_s, \beta_t\rangle \alpha_t.
\end{equation}
The condition 
$\Phi_1=\Phi^+_1\uplus \Phi^-_1$ implies that either 
\begin{equation}
 \label{eq:c2}
\rho_1(t)\alpha_s=\sum_{r\in S}c_r \alpha_r, \text{   where all $c_r\geq 0$}, 
\end{equation}
or else
\begin{equation}
 \label{eq:c3}
\rho_1(t)\alpha_s=\sum_{r\in S}-c_r \alpha_r, \text{   where all $c_r\geq 0$}.
\end{equation}
The following argument involving inspecting the coefficients rules out the possibility of (\ref{eq:c3}). 
Indeed, in view of (\ref{eq:c1}) we would have from (\ref{eq:c3}) that
$$(1+c_s)+\sum_{r\in S\setminus\{s, t\}}c_r \alpha_r=(2\langle \alpha_s, \beta_t\rangle -c_t)\alpha_t.$$
Now if $2\langle \alpha_s, \beta_t\rangle -c_t>0$ then we have a contradiction to (D2), 
since then $\alpha_t\in \PLC(\Pi_1\setminus\{\alpha_t\})$; whereas if 
$2\langle \alpha_s, \beta_t\rangle -c_t\leq 0$ then we again have a contradiction
to (D2), since then $0\in \PLC(\Pi_1)$. 
Thus (\ref{eq:c2}) must be the case, and in view of (\ref{eq:c1}) we have 
$$(1-c_s) \alpha_s = (2\langle \alpha_s, \beta_t\rangle +c_t)\alpha_t + \sum_{r\in S\setminus\{s, t\}}c_r \alpha_r.$$
Suppose  for a contradiction that $\langle \alpha_s, \beta_t\rangle >0$. 
Then $2\langle \alpha_s, \beta_t\rangle +c_t>0$.
Now if $1-c_s>0$ then we have a contradiction to condition (D2), since 
$\alpha_s\in \PLC(\Pi_1\setminus\{\alpha_s\})$; 
whereas if $1-c_s\leq 0$ then we again have a contradiction to (D2), since 
$0\in \PLC(\Pi_1)$.
Thus it follows from these contradictions that $\langle \alpha_s, \beta_t\rangle\leq 0$, 
and interchange the roles of $s$ and $t$, we see 
that $\langle \alpha_t, \beta_s\rangle \leq 0$, whence 
(D3) holds.  

Next, suppose that further $\langle \alpha_s, \beta_t\rangle =0$, and we prove that (D4) holds. Observe that
\begin{align*}
 \rho_1(t)\rho_1(s)\alpha_t =\rho_1(t)(\rho_1(s)\alpha_t) 
                           &=-\alpha_t-2\langle \alpha_t, \beta_s\rangle \alpha_s
                           +4\langle \alpha_t, \beta_s\rangle \langle\alpha_s,\beta_t\rangle \alpha_t\\
                          &=-\alpha_t-2\langle \alpha_t, \beta_s\rangle \alpha_s.
\end{align*}
Again the assumption that $\Phi_1=\Phi^+_1\uplus \Phi^-_1$ implies that either
\begin{equation}
 \label{eq:c4}
-\alpha_t-2\langle \alpha_t, \beta_s\rangle \alpha_s =\sum_{r\in S} c_r \alpha_r, \text{   where all $c_r\geq 0$,}
\end{equation}
or else
\begin{equation}
 \label{eq:c5}
-\alpha_t-2\langle \alpha_t, \beta_s\rangle \alpha_s =\sum_{r\in S} -c_r \alpha_r, \text{   where all $c_r\geq 0$.}
\end{equation}
A similar argument involving inspecting the coefficients together with (D2) make 
it possible to conclude that only (\ref{eq:c5}) is possible.
Hence 
\begin{equation}
\label{eq:c6}
(-2\langle \alpha_t, \beta_s\rangle +c_s)\alpha_s+\sum_{r\in S\setminus\{s, t\}}c_r \alpha_s =(1-c_t)\alpha_t.  
\end{equation}
Observe that (D3) just prove above yields that $-\langle \alpha_t, \beta_s\rangle \geq 0$.
Now if $1-c_t<0$ then we will have a contradiction to (D2), since then $0\in \PLC(\Pi_1)$; 
whereas if $1-c_t>0$ then we again have a contradiction to (D2), since then 
$\alpha_t\in \PLC(\Pi_1\setminus\{\alpha_s\})$. Thus $c_t =1$, and then  
(\ref{eq:c6}) implies, in view of (D2) and (D3), that $\langle \alpha_t, \beta_s\rangle =0=c_s$ 
(and $c_r=0$ for all $r\in S\setminus\{s, t\}$).
Interchange the roles of $s$ and $t$ we deduce that
$\langle \alpha_t, \beta_s\rangle =0$ implies that $\langle \alpha_s, \beta_t\rangle =0$, whence (D4) holds.

To prove that (D5) holds, we may assume that 
$\langle \alpha_s, \beta_t\rangle\langle \alpha_t, \beta_s\rangle\neq 0$, for otherwise
$\langle \alpha_s, \beta_t\rangle\langle \alpha_t, \beta_s\rangle=\cos^2\frac{\pi}{2}$, trivially 
satisfying (D5). We let $\mathscr{A}$, $\gamma$, $q$, $X$, $p_n$, $A$ and $B$ be as defined before 
Proposition \ref{pp:pn}. If we set
\begin{align*}
 \mathscr{A}&=\R;\\
           q&= 1; \\
           \gamma&=\sqrt{\langle \alpha_s, \beta_t\rangle\langle \alpha_t, \beta_s\rangle};\\
\noalign{\hbox{and}}
          X &=\frac{-\langle \alpha_t, \beta_s\rangle}{\sqrt{\langle \alpha_s, \beta_t\rangle\langle \alpha_t, \beta_s\rangle}},
 \end{align*}
then it is readily checked that $A$ and $B$ are the matrices representing the actions of $\rho_1(s)$ and $\rho_1(t)$ respectively, 
on the $\langle \{\, \rho_1(s), \rho_1(t)\,\}\rangle$-invariant subspace $\R\alpha_s+\R\alpha_t$. It follows from (\ref{eq:MA1}) to
(\ref{eq:MA2}) and a similar argument involving inspecting the coefficients as used above that the requirement 
$$ \langle \{\, \rho_1(s), \rho_1(t)\,\}\rangle \alpha_s\cup \langle \{\, \rho_1(s), \rho_1(t)\,\}\rangle\alpha_t
\subseteq \Phi^+_1\uplus \Phi^-_1 $$
is equivalent to $p_n p_{n+1}\geq 0$ for all $n\in \N$. By Proposition~\ref{pp:pn}, this later condition is, in turn, 
equivalent to 
$$\langle \alpha_s, \beta_t\rangle\langle \alpha_t, \beta_s\rangle\in \{\, \cos^2\frac{\pi}{m} \mid m\in \N_{\geq 2}\,\}\cup[1, \infty),$$
whence (D5) holds, finally establishing that (i) implies (iii).
\end{proof}

\begin{notation}
For $w_i\in W_i$ (for each $i\in \{1, 2\}$), let $\ord(w_i)$ denote the order of $w_i$ in $W_i$.
For those $s, t\in S$ with $\langle \alpha_s, \beta_t\rangle \langle \alpha_t, \beta_s\rangle \geq 1$, 
extend the definition of $m_{st}$ (given in Theorem~\ref{th:datum}) by setting  $m_{st}=\infty$.
\end{notation}

\begin{proposition}
 \label{pp:order}
Suppose that one of the (equivalent) statements of 
Theorem \ref{th:datum} is satisfied. Then 
$\ord(\rho_i(s)\rho_i(t))=m_{st}$.
\end{proposition}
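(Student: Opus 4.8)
The plan is to compute the order of $\rho_i(s)\rho_i(t)$ by passing to the $\langle\{\rho_i(s),\rho_i(t)\}\rangle$-invariant subspace $U:=\R\alpha_s+\R\alpha_t\subseteq V_i$ (taking $i=1$; the case $i=2$ is symmetric). Exactly as in the proof of Theorem~\ref{th:datum}, I would set $\mathscr{A}=\R$, $q=1$, $\gamma=\sqrt{\langle\alpha_s,\beta_t\rangle\langle\alpha_t,\beta_s\rangle}$, and $X=-\langle\alpha_t,\beta_s\rangle/\gamma$ whenever $\gamma\neq 0$, so that the matrices $A,B$ of Proposition~\ref{pp:pn} represent $\rho_1(s),\rho_1(t)$ acting on $U$ with respect to the basis $(\alpha_s,\alpha_t)$. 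Since the order of $\rho_1(s)\rho_1(t)$ in $W_1$ is at least its order as a linear map on $U$ (and we will see the two coincide), the computation reduces to determining the order of $AB$.

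The next step handles the two cases of (D5) separately. If $\langle\alpha_s,\beta_t\rangle\langle\alpha_t,\beta_s\rangle=\cos^2\frac{\pi}{m_{st}}$ for an integer $m_{st}\geq 2$, then $\gamma=\cos\frac{\pi}{m_{st}}$, so $\gcd(m_{st},1)=1$ and Proposition~\ref{pp:pn}(iii) gives that $AB$ has order exactly $m_{st}$; hence $\ord(\rho_1(s)\rho_1(t))\geq m_{st}$. For the reverse inequality I would invoke Proposition~\ref{pp:pn}(ii): with $k=1$ the matrices $A$ and $B$ satisfy the braid relation with $m_{st}$ factors on each side, and since $A^2=B^2=I$ (as $q=1$), this forces $(AB)^{m_{st}}=I$ on $U$. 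Because $\rho_1(s),\rho_1(t)$ fix $U$ setwise and act trivially on a complement only up to the already-known relations, I must still argue that $(\rho_1(s)\rho_1(t))^{m_{st}}=1$ as an element of $\GL(V_1)$, not merely on $U$. The standard device is: $\rho_1(s)\rho_1(t)$ acts as the identity on any $x$ with $\langle x,\beta_s\rangle=\langle x,\beta_t\rangle=0$, and on the complementary directions it is governed by the $2\times 2$ computation on $U$ via the formula $\rho_1(t)x=x-2\langle x,\beta_t\rangle\alpha_t$; tracking the coefficients of $\alpha_s,\alpha_t$ through powers of $\rho_1(s)\rho_1(t)$ reduces precisely to the matrix identities (\ref{eq:MA1})--(\ref{eq:MA2}), so $(AB)^{m_{st}}=I$ yields $(\rho_1(s)\rho_1(t))^{m_{st}}=1$ globally. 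Thus the order in $W_1$ equals $m_{st}$.

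If instead $\langle\alpha_s,\beta_t\rangle\langle\alpha_t,\beta_s\rangle\geq 1$, then $\gamma\geq 1$; by the formula (\ref{eq:MA4}) for $p_n$ (the $\gamma=1$ case gives $p_n=n$, and $|\gamma|>1$ gives $p_n=\sinh n\theta/\sinh\theta$), we have $p_n\neq 0$ for every integer $n>0$, so Proposition~\ref{pp:pn}(iii) (its "infinite order" clause) shows $AB$ has infinite order on $U$, whence $\rho_1(s)\rho_1(t)$ has infinite order in $W_1$, matching $m_{st}=\infty$ under the extended convention. Combining both cases gives $\ord(\rho_1(s)\rho_1(t))=m_{st}$, and the identical argument in $V_2$ (using that by Theorem~\ref{th:datum} the products $\langle\alpha_s,\beta_t\rangle\langle\alpha_t,\beta_s\rangle$ are the same data governing $W_2$) completes the proof. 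The main obstacle is the bookkeeping in the previous paragraph: lifting the relation $(AB)^{m_{st}}=I$ from the invariant plane $U$ to all of $V_1$ requires being careful that $\rho_1(s)\rho_1(t)$ genuinely acts trivially off the span of $\alpha_s,\alpha_t$ in the relevant sense, which is where the "inspecting the coefficients" technique reappears.
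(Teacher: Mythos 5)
Your proposal is essentially correct but takes a genuinely different route from the paper. The paper's proof is a one-line reduction: by Proposition~\ref{pp:eqv} the tuple $(S,V_1,V_2,\Pi_1,\Pi_2,\langle\,,\,\rangle)$ is a Coxeter datum in the sense of \cite{FU2}, and the order statement is then quoted directly from \cite[Proposition~2.8]{FU2}. You instead give a self-contained computation reusing the dihedral matrix machinery (Proposition~\ref{pp:pn} and equations (\ref{eq:MA1})--(\ref{eq:MA2})) that the paper only deploys in the proof of Theorem~\ref{th:datum}; this buys independence from the external reference at the cost of redoing work that \cite{FU2} has already packaged. Two points in your argument deserve tightening. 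First, the degenerate case $\langle\alpha_s,\beta_t\rangle\langle\alpha_t,\beta_s\rangle=0$ (where $X$ is undefined) should be disposed of separately: by (D4) both pairings vanish, a direct computation shows $\rho_1(s)$ and $\rho_1(t)$ commute on all of $V_1$, and the product is a nontrivial involution, so the order is $2=m_{st}$. Second, your ``main obstacle'' --- lifting $(AB)^{m_{st}}=I$ from $U=\R\alpha_s+\R\alpha_t$ to $(\rho_1(s)\rho_1(t))^{m_{st}}=1$ on $V_1$ --- is real and your sketch presupposes that $V_1$ is spanned by $U$ together with $K:=\{x:\langle x,\beta_s\rangle=\langle x,\beta_t\rangle=0\}$, which is not automatic. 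The clean fix: the matrix $\bigl(\begin{smallmatrix}1 & \langle\alpha_t,\beta_s\rangle\\ \langle\alpha_s,\beta_t\rangle & 1\end{smallmatrix}\bigr)$ of the functionals $\langle\cdot,\beta_s\rangle,\langle\cdot,\beta_t\rangle$ restricted to $U$ is invertible exactly when $\langle\alpha_s,\beta_t\rangle\langle\alpha_t,\beta_s\rangle\neq 1$, which holds whenever $m_{st}<\infty$ since then the product is $\cos^2(\pi/m_{st})<1$; hence $V_1=U\oplus K$ in precisely the case where you need the decomposition, and the lift goes through. (Alternatively: $w:=(\rho_1(s)\rho_1(t))^{m_{st}}$ fixes $U$ pointwise and acts trivially on $V_1/U$, so $w=1+N$ with $N^2=0$ and $w^k=1+kN$; one must still show $N=0$, which again comes down to the nondegeneracy just noted.) With these details supplied your argument is sound and matches the statement, including the $m_{st}=\infty$ convention.
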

\begin{proof}
If one of the (equivalent) statements of Theorem \ref{th:datum} is satisfied, then 
$\mathscr{C}:=(\, S, V_1, V_2, \Pi_1, \Pi_2, \langle, \rangle\,)$ 
forms a Coxeter datum in the sense of \cite{FU2}, and thus the required result 
follows immediately from Proposition~2.8 of \cite{FU2}.
\end{proof}

We point out that a Coxeter datum in the sense of \cite{FU2}
automatically satisfies the conditions (D1) to (D5) of the present paper. 
Indeed, the only possible difference of these two formulations is that in (D2) of 
the present paper we require a seemingly extra condition that 
$\alpha_s\notin\PLC(\Pi_1\setminus\{\alpha_s\})$ and 
$\beta_s\notin\PLC(\Pi_2\setminus\{\beta_s\})$ for each $s\in S$. 
However, it can be checked that this condition is an immediate 
consequence of (C1), (C2) and (C5) of a Coxeter datum in the sense of \cite{FU2}
(in fact, this is just \cite[Lemma 2.5]{FU2}). Thus we
have:

\begin{proposition}
\label{pp:eqv}
 The following are equivalent:
\begin{itemize}
 \item [(i)] $\mathscr{C}:=(\, S, V_1, V_2, \Pi_1, \Pi_2, \langle\,,\,\rangle\,)$ satisfies 
             one of the (equivalent) statements of Theorem \ref{th:datum} ;
 \item [(ii)] $\mathscr{C}:=(\, S, V_1, V_2, \Pi_1, \Pi_2, \langle\,,\,\rangle\,)$ is a 
              Coxeter datum in the sense of \cite{FU2}.
\end{itemize} 
\qed
\end{proposition}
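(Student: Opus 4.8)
The plan is to prove the two implications separately, relying on the discussion already carried out in this section and on the axioms of a Coxeter datum as recorded in \cite{FU2}. Recall that throughout Section~\ref{sec:prr} the data $\mathscr{C}=(\,S,V_1,V_2,\Pi_1,\Pi_2,\langle\,,\,\rangle\,)$ is assumed to satisfy (D1) and (D2); so the real content of the proposition is that, \emph{in the presence of} (D1) and (D2), the conditions (D3)--(D5) are exactly what is needed to be a Coxeter datum in the sense of \cite{FU2}. The first preparatory step is therefore to write out the dictionary between the axiom list (C1)--(C5) of \cite{FU2} and the conditions (D1)--(D5) used here: (D1) corresponds to (C1); (D3) together with (D4) encode the sign and symmetry axiom governing the off-diagonal pairings $\langle\alpha_s,\beta_t\rangle$; (D5) is the axiom fixing the admissible values of the products $\langle\alpha_s,\beta_t\rangle\langle\alpha_t,\beta_s\rangle$; and the positivity clauses $0\notin\PLC(\Pi_i)$ of (D2)(i) match the corresponding positivity axioms of a Coxeter datum.

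For the implication (i) $\Rightarrow$ (ii): if $\mathscr{C}$ satisfies one of the equivalent statements of Theorem~\ref{th:datum}, then by that theorem it satisfies condition (iii), i.e.\ (D3), (D4) and (D5) hold for all $s,t\in S$. Combining this with (D1) and (D2), which hold by standing assumption, and using the dictionary above, every axiom in the definition of a Coxeter datum in \cite{FU2} is satisfied; hence $\mathscr{C}$ is a Coxeter datum. For the converse, (ii) $\Rightarrow$ (i): assuming $\mathscr{C}$ is a Coxeter datum, the axioms immediately deliver (D1), (D3), (D4), (D5), as well as the part of (D2) asserting $0\notin\PLC(\Pi_1)$ and $0\notin\PLC(\Pi_2)$. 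The only point requiring separate attention is the clause $\alpha_s\notin\PLC(\Pi_1\setminus\{\alpha_s\})$ and $\beta_s\notin\PLC(\Pi_2\setminus\{\beta_s\})$ of (D2)(ii), which is not literally among the Coxeter-datum axioms; but, as observed in the remark preceding this proposition, this is precisely \cite[Lemma 2.5]{FU2}, a consequence of (C1), (C2) and (C5). Thus $\mathscr{C}$ satisfies (D1)--(D5), so by Theorem~\ref{th:datum} it satisfies the equivalent statements listed there, which is (i).

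I do not expect any genuine obstacle here: the proposition is essentially a bookkeeping identification of two axiom systems. The one substantive input — that the strengthened form of (D2)(ii) is not an extra hypothesis but follows from the Coxeter-datum axioms — has already been isolated in the text and attributed to \cite[Lemma 2.5]{FU2}, so the argument reduces to a short verification once the correspondence between (C1)--(C5) and (D1)--(D5) is spelled out.
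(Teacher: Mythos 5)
Your proposal is correct and follows essentially the same route as the paper: the paper's (implicit) proof is exactly the observation in the preceding paragraph that the two axiom systems coincide except for the clause $\alpha_s\notin\PLC(\Pi_1\setminus\{\alpha_s\})$, $\beta_s\notin\PLC(\Pi_2\setminus\{\beta_s\})$ of (D2), which is supplied by \cite[Lemma 2.5]{FU2}. Your write-up merely makes the two directions and the appeal to Theorem~\ref{th:datum} explicit, which the paper leaves to the reader.
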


Next we have a result which enables us to give a characterization of Coxeter groups, among a large 
family of linear groups that are generated by involutions, in terms of their root systems:
\begin{theorem}
\label{th:key}
Let $S$, $\Pi_1$ and $\Pi_2$ be the same as at the beginning of this section, and
let $R_1$, $W_1$, $\Phi_1$, $R_2$, $W_2$ and $\Phi_2$ be as in Definition~\ref{def:start}.
Let $(W, R)$ be a Coxeter system in the sense of \cite{BN68} or \cite{HM}, with $W$ being
an abstract group generated by a set of involutions $R:=\{\,r_s\mid s\in S \,\}$ subject
only to the condition that for $s, t\in S$ the order of $r_s r_t$ is either 
equal to $m$ if $\langle \alpha_s, \beta_t\rangle \langle \alpha_t, \beta_s\rangle =\cos^2(\pi/m)$, 
or else equal to infinity. 
Then $\Phi_1=\Phi^+_1\uplus\Phi^-_1$, or equivalently, $\Phi_2=\Phi^+_2\uplus\Phi^-_2$ only if
there exist isomorphisms $f_1\colon W\to W_1$ and $f_2\colon W\to W_2$ such that $f_1(r_s)=\rho_1(s)$
and $f_2(r_s)=\rho_2(s)$ for all $s\in S$. 
\end{theorem}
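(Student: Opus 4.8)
The plan is to prove Theorem~\ref{th:key} by exhibiting the surjective homomorphisms $f_i\colon W\to W_i$ first, and then verifying injectivity using the hypothesis $\Phi_i=\Phi_i^+\uplus\Phi_i^-$. Since $W$ is defined by the Coxeter presentation with generators $R=\{r_s\mid s\in S\}$ and relations $(r_sr_t)^{m_{st}}=1$ (interpreting $(r_sr_t)^\infty=1$ as no relation), a homomorphism $f_i\colon W\to W_i$ with $f_i(r_s)=\rho_i(s)$ exists as soon as we check that the elements $\rho_i(s)$ satisfy the defining relations of $W$. Each $\rho_i(s)$ is an involution by the Remark following Definition~\ref{def:start}, so the only thing to verify is that $\ord(\rho_i(s)\rho_i(t))$ divides $m_{st}$ whenever $m_{st}<\infty$; but Proposition~\ref{pp:order} gives $\ord(\rho_i(s)\rho_i(t))=m_{st}$ exactly (under the standing hypothesis, which we are assuming). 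Hence $f_i$ exists and is automatically surjective since $W_i=\langle R_i\rangle=\langle\{\rho_i(s)\mid s\in S\}\rangle$. So the whole content of the theorem is the injectivity of $f_1$ and $f_2$.

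For injectivity I would use the root system as a combinatorial length-function tool, mimicking the classical argument that the Tits representation is faithful. Concretely, fix $i$ and suppose $w\in\ker f_i$, so $f_i(w)=\rho_i(w)$ acts trivially on $V_i$. Write $w=r_{s_1}\cdots r_{s_k}$ as a word in $R$ of minimal length $k$; the goal is to show $k=0$. The key lemma to establish is the \emph{exchange/deletion} behaviour transported through $f_i$: because $\Phi_i=\Phi_i^+\uplus\Phi_i^-$, for any $s\in S$ and any $w\in W$ with $\ell(r_sw)<\ell(w)$ (length in $W$) one has $\rho_i(w)^{-1}\alpha_s\in\Phi_i^-$, and if $\ell(r_sw)>\ell(w)$ then $\rho_i(w)^{-1}\alpha_s\in\Phi_i^+$. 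Equivalently, counting the roots in $\Phi_i^+$ sent to $\Phi_i^-$ by $\rho_i(w)$ reproduces $\ell(w)$. This is precisely the kind of statement proved for Coxeter data in \cite{FU2}, and under Proposition~\ref{pp:eqv} our hypotheses put us in that setting, so I would invoke the relevant results of \cite{FU2} (the analogue of the classical ``$N(w)$'' count, Lemma~3.2 and its consequences) rather than reprove them. Granting this, if $w\neq 1$ then picking $s$ with $\ell(r_sw)<\ell(w)$ forces $\rho_i(w)^{-1}\alpha_s\in\Phi_i^-\neq\{\alpha_s\}$, so $\rho_i(w)\neq\mathrm{id}$, i.e.\ $w\notin\ker f_i$. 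Therefore $\ker f_i=\{1\}$ and $f_i$ is an isomorphism; the same argument with $i=1$ and $i=2$ finishes both cases, and the equivalence of the two hypotheses $\Phi_1=\Phi_1^+\uplus\Phi_1^-$ and $\Phi_2=\Phi_2^+\uplus\Phi_2^-$ is already Theorem~\ref{th:datum}.

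There is one subtlety in matching the length function on the abstract Coxeter group $W$ with the root-theoretic count on $W_i$, because a priori $f_i$ might not be length-preserving before we know it is injective. I would handle this by working with the \emph{reduced-word} structure on $W$ directly: given a reduced expression $w=r_{s_1}\cdots r_{s_k}$ in $W$, track the sequence of simple roots $\beta^{(j)}:=\rho_i(r_{s_k})\cdots\rho_i(r_{s_{j+1}})\,\alpha_{s_j}$ and show, by descending induction on $j$ using the recursion $\rho_i(s)\alpha_s=-\alpha_s$ together with (D3) and the sign-coherence of $\PLC(\Pi_i)$, that each $\beta^{(j)}$ lies in $\Phi_i^+$; here the hypothesis $\Phi_i=\Phi_i^+\uplus\Phi_i^-$ is exactly what rules out the degenerate possibility that some $\beta^{(j)}$ is neither positive nor negative. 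The argument that the $\beta^{(j)}$ are pairwise distinct (so that $\rho_i(w)$ sends at least $k$ positive roots to negative ones) uses the standard trick: if $\beta^{(j)}=\beta^{(j')}$ for $j<j'$ then $\rho_i(r_{s_{j+1}})\cdots\rho_i(r_{s_{j'}})$ fixes $\alpha_{s_{j'}}$ up to sign, which via the exchange condition (valid in the abstract $W$ by Tits' theorem, since $(W,R)$ is a genuine Coxeter system) contradicts reducedness of the word. This distinctness step is the main obstacle: it is where I must be careful not to accidentally assume faithfulness of $\rho_i$, and instead route everything through the combinatorics of $(W,R)$ on one side and the positivity decomposition of $\Phi_i$ on the other. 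Once distinctness is in hand, $\rho_i(w)\neq\mathrm{id}$ whenever $w\neq 1$, completing the proof.
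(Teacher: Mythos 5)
Your argument is correct and follows essentially the same route as the paper, which disposes of Theorem~\ref{th:key} in one line: ``Follows immediately from Proposition \ref{pp:eqv} above and \cite[Theorem 2.10]{FU2}.'' Your proposal is a sound reconstruction of the content of that citation --- existence and surjectivity of $f_i$ via the Coxeter presentation of $W$ together with Proposition~\ref{pp:order}, and injectivity via the standard positive/negative root-counting argument made available by $\Phi_i=\Phi_i^+\uplus\Phi_i^-$ and the Coxeter-datum machinery of \cite{FU2} (accessed through Proposition~\ref{pp:eqv}) --- so it matches the paper's approach with the delegated details filled in.
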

\begin{proof}
Follows immediately from Proposition \ref{pp:eqv} above and \cite[Theorem 2.10]{FU2}.
\end{proof}

\begin{remark}
Theorem \ref{th:key} shows that if $\Phi_1=\Phi^+_1\uplus\Phi^-_1$, 
or equivalently, $\Phi_2=\Phi^+_2\uplus\Phi^-_2$ then $(W_1, R_1)$
and $(W_2, R_2)$ are Coxeter systems isomorphic to $(W, R)$. It is well known 
in the literature that all Coxeter groups have root systems decomposable into a 
disjoint union of positive roots and negative roots ( 
\cite[Proposition 4.2.5]{ABFB} or \cite[\S 5.4]{HM}, for example). Furthermore, given 
an arbitrary Coxeter system $(W', R')$, it follows from \cite{FU0} and \cite{FU2}
that we could associate a Coxeter datum 
$\mathscr{C'}:=(\, S', V'_1, V'_2, \Pi'_1, \Pi'_2, \langle\,,\,\rangle'\,)$ to $(W', R')$, 
such that the paired root systems $\Phi'_1$ and $\Phi'_2$ arising from this particular 
Coxeter datum admit decompositions 
$\Phi'_1=\Phi'^+_1\uplus\Phi'^-_1$ and $\Phi'_2=\Phi'^+_2\uplus\Phi'^-_2$. 
These facts combined with Theorem \ref{th:key} yield that
if a linear group is generated by involutions, 
then it is a Coxeter group if and only if it has a root system decomposable 
into a disjoint union of positive roots and negative roots. 
\end{remark}

Let $W$ and $R$ be as in Theorem \ref{th:key}, we call $(W, R)$ the \emph{abstract Coxeter system}
corresponding to $\mathscr{C}$ with $W$ being the corresponding \emph{abstract Coxeter group}.
We see immediately from the above theorem that $f_1$ and $f_2$ give rise to faithful $W$-actions
on $V_1$ and $V_2$ in the natural way with $wx:=(f_1(w))(x)$ and $w y:=(f_2(w))(y)$ for all $w\in W$,
$x\in V_1$ and $y\in V_2$.

To close this section we include the following useful result taken from \cite{FU2}:
\begin{lemma}
 \label{lem:equivariant}
(i)\quad $\langle\,,\,\rangle$ is $W$-invariant, that is, 
$\langle wx, wy\rangle =\langle x, y\rangle$ for all $w\in W$, $x\in V_1$ and $y\in V_2$.

\noindent (ii)\quad There exists a $W$-equivariant bijection $\phi\colon \Phi_1\to \Phi_2$
                    satisfying $\phi(\alpha_s)=\beta_s$ for all $s\in S$.

\noindent (iii)\quad Let $\phi$ be as in (ii) above, and let $x, x'\in \Phi_1$. Then 
$\langle x, \phi(x')\rangle =0$ if and only if $\langle x', \phi(x)\rangle =0$.
\end{lemma}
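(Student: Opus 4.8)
The plan is to prove Lemma~\ref{lem:equivariant} by leaning on Theorem~\ref{th:key}, which identifies $W_1$ and $W_2$ with the abstract Coxeter group $W$ via $f_1$ and $f_2$, so that $W$ acts on both $V_1$ and $V_2$ with $r_s$ acting as $\rho_1(s)$ and $\rho_2(s)$ respectively. For part (i), it suffices to check $W$-invariance of $\langle\,,\,\rangle$ on generators, i.e.\ that $\langle \rho_1(s)x,\rho_2(s)y\rangle=\langle x,y\rangle$ for all $s\in S$, $x\in V_1$, $y\in V_2$; this is a one-line expansion using the definitions of $\rho_1(s)$ and $\rho_2(s)$ together with condition (D1), $\langle\alpha_s,\beta_s\rangle=1$. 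Since the generators $r_s$ generate $W$ and invariance is preserved under composition of the (paired) actions, invariance for all $w\in W$ follows by induction on word length.

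For part (ii), I would define $\phi$ on $\Phi_1$ by the rule $\phi(w\alpha_s):=w\beta_s$ for $w\in W$ and $s\in S$, using the $W$-action on $\Phi_1$ and $\Phi_2$ coming from $f_1$ and $f_2$. The content here is \emph{well-definedness}: if $w\alpha_s=w'\alpha_t$ in $\Phi_1$, I must show $w\beta_s=w'\beta_t$ in $\Phi_2$. Equivalently, writing $v=w'^{-1}w$, I need that $v\alpha_s=\alpha_t$ implies $v\beta_s=\beta_t$. The standard way to see this is to use the description of root stabilizers in a Coxeter group: the stabilizer in $W$ of a simple root $\alpha_s$ (acting on $\Phi_1$) equals the stabilizer of the corresponding $\beta_s$ (acting on $\Phi_2$), because both are controlled by the same combinatorial data --- namely the ``reflection'' $r_s$ and the parabolic-type subgroup fixing it. Concretely, since $\rho_1(s)$ has a $1$-dimensional $-1$-eigenspace $\R\alpha_s$ and fixes $\beta_s$-orthogonality data symmetrically, one shows $\mathrm{Stab}_W(\alpha_s)=\mathrm{Stab}_W(\beta_s)$, which gives well-definedness; surjectivity is immediate since every element of $\Phi_2$ is of the form $w\beta_s$, and injectivity follows by the symmetric argument (or because $\phi$ has an obvious candidate inverse $w\beta_s\mapsto w\alpha_s$, which is well-defined by the same reasoning). $W$-equivariance and $\phi(\alpha_s)=\beta_s$ are then built into the definition.

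For part (iii), given $x=w\alpha_s$ and $x'=w'\alpha_t$ in $\Phi_1$, so that $\phi(x')=w'\beta_t$ and $\phi(x)=w'\beta_s$ (after applying (ii) and equivariance), I would use part (i) to move the group element: $\langle x,\phi(x')\rangle = \langle w\alpha_s, \phi(w'\alpha_t)\rangle$. Applying $W$-invariance with a suitable $w''\in W$ reduces the claim to the case where one of $x,\phi(x')$ is a simple root, and then to comparing $\langle \alpha_s,\beta_t\rangle$ with $\langle \alpha_t,\beta_s\rangle$, at which point condition (D4) from Theorem~\ref{th:datum}(iii) --- which holds since we are assuming the decomposition $\Phi_1=\Phi_1^+\uplus\Phi_1^-$ --- gives exactly that one vanishes iff the other does. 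Care is needed because a root may be written as $w\alpha_s$ in more than one way, but by (ii) the associated element $w\beta_s$ of $\Phi_2$ is independent of the choice, so the pairing value $\langle w\alpha_s, \phi(w'\alpha_t)\rangle$ is well-defined.

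The main obstacle is the well-definedness of $\phi$ in part (ii): everything else is a short computation with (D1) or a direct appeal to (D4) and $W$-invariance. Establishing $\mathrm{Stab}_W(\alpha_s)=\mathrm{Stab}_W(\beta_s)$ requires knowing enough about the root system $\Phi_1$ of the Coxeter group $W_1\cong W$ --- in particular that $\alpha_s=w\alpha_t$ forces $s$ and $t$ to be conjugate by an element of $W$ carrying one configuration to the other in a way visible on both $V_1$ and $V_2$ simultaneously. Since all of this structure is precisely what the Coxeter-datum machinery of \cite{FU2} is built to supply, I expect the cleanest route is to cite the relevant structural result from \cite{FU2} rather than rederive it; indeed the lemma is stated as being taken from \cite{FU2}, so the proof here should be a brief indication that the hypotheses of Theorem~\ref{th:key} put us in the situation where the cited results apply, followed by the generator-level verification of (i) and the (D4)-reduction for (iii).
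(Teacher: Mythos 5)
The paper's own ``proof'' of Lemma~\ref{lem:equivariant} consists of three bare citations: part (i) is Lemma~2.13 of \cite{FU2}, part (ii) is Proposition~3.18 of \cite{FU2}, and part (iii) is Corollary~3.25 of \cite{FU2}. Since you conclude that the cleanest route is to cite \cite{FU2}, your bottom line coincides with the paper's. Your generator-level verification of (i) is correct and standard: expanding $\langle \rho_1(s)x,\rho_2(s)y\rangle$ produces the cross terms $-2\langle\alpha_s,y\rangle\langle x,\beta_s\rangle-2\langle x,\beta_s\rangle\langle\alpha_s,y\rangle+4\langle x,\beta_s\rangle\langle\alpha_s,y\rangle\langle\alpha_s,\beta_s\rangle$, which vanish by (D1). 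For (ii) you correctly isolate well-definedness of $w\alpha_s\mapsto w\beta_s$ as the crux, but the asserted equality of stabilizers is precisely the content of the cited Proposition~3.18 of \cite{FU2}; your sketch restates it rather than proves it, which is fine only because you ultimately defer to the citation.

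The one genuine mathematical gap is in your sketch of (iii). $W$-invariance lets you move \emph{one} of the two roots into $\Pi_1$ (or $\Pi_2$): $\langle x,\phi(x')\rangle=\langle w'^{-1}x,\beta_t\rangle$ where $x'=w'\alpha_t$. But $w'^{-1}x$ is then a general element of $\Phi_1$, not a simple root, so you never reach the configuration $\langle\alpha_s,\beta_t\rangle$ versus $\langle\alpha_t,\beta_s\rangle$ to which (D4) applies; statement (iii) is a strict generalization of (D4) to arbitrary pairs of roots, not a consequence of it by translation. A correct elementary argument instead exploits symmetry through the reflections: $r_{x'}x=x-2\langle x,\phi(x')\rangle x'$, so $\langle x,\phi(x')\rangle=0$ forces $r_{x'}x=x$ and hence $r_{x'}r_xr_{x'}=r_{r_{x'}x}=r_x$, i.e.\ $r_x$ and $r_{x'}$ commute; conversely, commutation forces $r_{x'}x\in\R x$ (since $r_z=r_x$ exactly when $\widehat{z}=\widehat{x}$), which together with the linear independence of $x$ and $x'$ when $r_x\neq r_{x'}$ gives $\langle x,\phi(x')\rangle=0$. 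Commutation being symmetric in $x$ and $x'$, the equivalence follows (the degenerate case $r_x=r_{x'}$ is handled by (D1) and invariance). This is presumably the mechanism behind Corollary~3.25 of \cite{FU2}, but it is not the reduction you describe.
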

\begin{proof}
 (i):\quad Lemma 2.13 of \cite{FU2}.

\noindent (ii):\quad Proposition 3.18 of \cite{FU2}.

\noindent (iii):\quad Corollary 3.25 of \cite{FU2}.
\end{proof}

For the rest of this paper, the notation $\phi$ will be fixed for the $W$-equivariant bijection 
in Lemma~\ref{lem:equivariant}~(iii).

\section{Reflection Subgroups and Canonical Generators in Coxeter Groups}
\label{sec:reflection}

Given a Coxeter group $W$ and its Coxeter generators $R$, a subgroup $W'$ of $W$
is called a \emph{reflection subgroup} if $W'$ is generated by those elements of the form $w r w^{-1}$ 
(where $w\in W$ and $r\in R$). It is well known that $W'$ is a Coxeter group, 
and consequently the notion of a Coxeter datum as in the previous section applies to $W'$. In this section
we study the paired root systems for $W'$ as a subsets of the paired root systems for $W$. In the spirit of the 
previous section, our investigation of the paired root systems for $W'$ is based on a Coxeter datum $\mathscr{C'}$
closely related to the Coxeter datum for 
the over group $W$. In particular, we show that the Coxeter generators of $W'$ 
are characterize by this Coxeter datum $\mathscr{C'}$. 
In addition to obtaining certain geometric insights of reflection subgroups of 
Coxeter groups, these investigations also establish the fact that the method of constructing paired
root systems via Coxeter data applies to paired root systems for reflection 
subgroups of a Coxeter group, either on their own
or as subsets of the paired root systems of the over group.

Suppose that 
$\mathscr{C}:=(\, S, V_1, V_2, \Pi_1, \Pi_2, \langle\,,\,\rangle\,)$ satisfies
conditions (D1) to (D5) of Section~\ref{sec:prr} inclusive (or in view of Proposition \ref{pp:eqv}, we
could equivalently suppose that $\mathscr{C}$ is a Coxeter datum in the sense of \cite{FU2}).
Let $(W, R)$ be the abstract Coxeter system associated to the Coxeter datum $\mathscr{C}$, and 
keep all the notation of the previous section.

Let $T:=\bigcup_{w\in W} w R w^{-1}$, and call it the \emph{set of reflections} in $W$.
For $s\in S$ and $w\in W$, observe that for each $x\in V_1$ and $y\in V_2$, 
Lemma~\ref{lem:equivariant} yields that
\begin{align}
 \label{eq:ref1}
 w r_s w^{-1} x=w(w^{-1}x-2\langle w^{-1}x, \beta_s\rangle \alpha_s)&=x-2\langle w^{-1}x, \beta_s\rangle w\alpha_s\notag\\
                                                                    &=x-2\langle x, \phi(w \alpha_s)\rangle w \alpha_s,
\end{align}
and 
\begin{align}
 \label{eq:ref2}
 w r_s w^{-1} y=w(w^{-1}y-2\langle \alpha_s, w^{-1}y\rangle \beta_s)&=y-2\langle w\alpha_s, y\rangle w\beta_s\notag\\
                                                                    &=y-2\langle \phi^{-1}(w\beta_s), y\rangle w \beta_s.
\end{align}
Now suppose that $\alpha\in \Phi_1$ and $\beta\in \Phi_2$ are arbitrary. 
Then $\alpha=w_1 \alpha_s$ and $\beta=w_2\beta_t$ for some $w_1, w_2\in W$ and $s, t\in S$. 
It follows from (\ref{eq:ref1}) and (\ref{eq:ref2}) that we can unambiguously define 
$r_{\alpha}, r_{\beta}\in T$, the \emph{reflection corresponding to} $\alpha$ and $\beta$
respectively, by 
\begin{align}
\label{eq:ref3}
 r_{\alpha}&= r_{w_1\alpha_s}:=w_1 r_s w_1^{-1},\\
\noalign{\hbox{and}}
\label{eq:ref4}
r_{\beta}&=r_{w_2 \beta_t}:= w_2 r_t w_2^{-1},
\end{align}
with
\begin{align*}
 r_{\alpha} x&= x-2 \langle x, \phi(\alpha)\rangle \alpha\\
\noalign{\hbox{for all $x\in V_1$ and}}
 r_{\beta} y&= y-2\langle \phi^{-1}(\beta), y\rangle \beta
\end{align*}
for all $y\in V_2$.

\begin{definition}
(i)\quad A subgroup $W'$ of $W$ is called a \emph{reflection subgroup}
if $W'=\langle W'\cap T\rangle$.

\noindent{(ii)}\quad For each $i\in \{1, 2\}$, a subset $\Phi'_i$ of $\Phi_i$
is called a \emph{root subsystem} if $r_x y\in \Phi'_i$ whenever $x, y\in \Phi'_i$.

\noindent{(iii)}\quad If $W'$ is a reflection subgroup, set
$\Phi_i(W'):=\{\,x\in \Phi_i\mid r_x\in W'\,\}$ for each $i\in {1, 2}$.
\end{definition}

\begin{lemma}
\label{lem:stab}
Let $W'$ be a reflection subgroup of $W$. Then for each $i\in\{1,2\}$
$$
W'\Phi_i(W')=\Phi_i(W').
$$
\end{lemma}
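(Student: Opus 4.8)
The plan is to show the two inclusions $W'\Phi_i(W')\subseteq \Phi_i(W')$ and $\Phi_i(W')\subseteq W'\Phi_i(W')$, the latter of which is immediate since $1\in W'$. For the former, fix $i\in\{1,2\}$, take $w'\in W'$ and $x\in \Phi_i(W')$, so by definition $r_x\in W'$. I need to check that $w'x\in \Phi_i$ and that $r_{w'x}\in W'$. Membership $w'x\in \Phi_i$ is clear because $\Phi_i=W\Pi_i$ is $W$-stable and $w'\in W'\leq W$.

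The key computational step is the conjugation formula $r_{w'x}=w' r_x (w')^{-1}$, which follows directly from the well-definedness of reflections established in equations~\eqref{eq:ref3} and~\eqref{eq:ref4}: writing $x=w_1\alpha_s$ (in the case $i=1$; the case $i=2$ is identical with $\beta_s$), we have $w'x=(w'w_1)\alpha_s$, so $r_{w'x}=(w'w_1)r_s(w'w_1)^{-1}=w'(w_1 r_s w_1^{-1})(w')^{-1}=w' r_x (w')^{-1}$. Since $r_x\in W'$ and $w'\in W'$ and $W'$ is a subgroup, we get $r_{w'x}=w'r_x(w')^{-1}\in W'$, hence $w'x\in\Phi_i(W')$. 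This proves $W'\Phi_i(W')\subseteq\Phi_i(W')$.

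Combining the two inclusions gives $W'\Phi_i(W')=\Phi_i(W')$ for each $i\in\{1,2\}$, as required. I do not anticipate any real obstacle here; the only point requiring minor care is invoking the unambiguous definition of $r_\alpha$ from~\eqref{eq:ref3}--\eqref{eq:ref4}, which is exactly what makes the conjugation identity $r_{w'x}=w'r_x(w')^{-1}$ legitimate regardless of how $x$ is written as a $W$-translate of a simple root. One could alternatively phrase the argument using the explicit action formulas $r_\alpha x = x - 2\langle x,\phi(\alpha)\rangle\alpha$ together with the $W$-equivariance of $\phi$ from Lemma~\ref{lem:equivariant}, but the conjugation-identity route is cleaner and avoids recomputation.
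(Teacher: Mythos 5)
Your proof is correct and follows essentially the same route as the paper: both reduce the claim to the conjugation identity $r_{w'x}=w'r_x(w')^{-1}$, which is legitimate by the well-definedness of $r_\alpha$ in \eqref{eq:ref3}--\eqref{eq:ref4}, together with $W'$ being closed under conjugation by its own elements. Your version is in fact slightly more streamlined, since the paper first factors $w'$ into reflections $t_1\cdots t_n\in W'\cap T$ and conjugates one reflection at a time, whereas you apply the identity to an arbitrary $w'\in W'$ directly; this shortcut is sound.
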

\begin{proof}
We prove that $W'\Phi_1(W')=\Phi_1(W')$ here, and we stress that the 
other half follows in the same way. Let $w\in W'$. By definition, 
we have $w=t_1t_2\cdots t_n$ where $t_1, t_2, \ldots, t_n \in W'\cap T$. 
Now let $x\in \Phi_1(W')$ 
be arbitrary.
Then $r_x\in W'$, and hence $r_{t_n x}=t_n r_x t_n\in W'$, which in turn yields that $t_n x\in \Phi_1(W')$. 
Then it follows that 
$t_{n-1}t_n x\in \Phi_1(W')$ and so on. Thus 
$wx =t_1\cdots t_n x\in \Phi_1(W')$. Since $x\in \Phi_1(W')$ is arbitrary, 
it follows that $w\Phi_1(W')\subseteq \Phi_1(W')$. Finally, replacing 
$w \in W'$ by $w^{-1}$ we see that $\Phi_1(W') \subseteq w\Phi_1(W')$.
\end{proof}

\begin{remark}
Let $W'$ be a reflection subgroup. For each $i\in \{1, 2\}$, 
it follows from the above lemma that $\Phi_i(W')$ is a root
subsystem of $\Phi_i$, and we call it the \emph{root subsystem
corresponding to} $W'$. It is easily seen that there is a 
bijective correspondence between the set of reflection 
subgroups $W'$ of $W$ and the set of root subsystems $\Phi'_i$
of $\Phi_i$: $W'$ uniquely determines the corresponding root 
subsystem $\Phi_i(W')$; and $\Phi'_i$ uniquely determines the
reflection subgroup $W':=\langle \{\, r_x\mid x\in \Phi'_i\,\}\rangle$.
\end{remark}

In fact, for a reflection subgroup $W'$, we shall see that $\Phi_1(W')$ 
and $\Phi_2(W')$ are the root systems for the Coxeter group $W'$ arising from 
a suitably chosen Coxeter datum. In order to do this, we need a few preparatory 
results first.

\begin{remark}
For each $i\in \{1, 2\}$, it has been observed in \cite{FU2} that non-trivial scalar multiple
of an element of $\Phi_i$ can still be an element
of $\Phi_i$ (see the example immediately after \cite[Definition 3.1]{FU2} and 
\cite[Lemma 3.20]{FU2}). Therefore, unlike in the classical setting of \cite{HM}, 
we do not have a bijection from $T$ to either $\Phi^+_1$ or $\Phi^+_2$. 
\end{remark}

\begin{definition}
For each $i\in \{1, 2\}$, define an equivalence relation $\sim_i$ on $\Phi_i$ as 
follows: if $z_1, z_2\in \Phi_i$, then $z_1\sim_i z_2$ if and only if $z_1$ and
$z_2$ are (non-zero) scalar multiples of each other. For each $z\in \Phi_i$, write 
$\widehat{z}$ for the equivalence class containing $z$ and write 
$\widehat{\Phi_i}=\{\, \widehat{z}\mid z\in \Phi_i\,\}$.
\end{definition}

\begin{remark}
Observe that $W$ has a natural action on $\widehat{\Phi_i}$ (for each $i\in \{1, 2\}$) 
given by $w\widehat{z}=\widehat{wz}$ for all $w\in W$ and $z\in \Phi_i$. Furthermore, 
given $z, z'\in \Phi_i$, the corresponding reflections $r_z$ and $r_{z'}$ are equal if 
and only if $\widehat{z}=\widehat{z'}$.
\end{remark}

\begin{definition}
\label{def:N}
For $i\in \{1, 2\}$, and for each $w\in W$, define 
$$
N_i(w) = \{\, \widehat{z} \mid \text{$z \in \Phi^+_i$ and
$w z\in \Phi^-_i$}\, \}. 
$$
\end{definition}
Note that for $w\in W$, the set $N_i(w)$ ($i=1, 2$) can be
alternatively characterized as 
$
\{\, \widehat{z} \mid \text{$z \in \Phi^-_i$ and
$w z\in \Phi^+_i$}\, \} 
$.
Hence $\widehat{z}\in N_i(w)$ if and only if precisely one element of the
set
$\{z, wz\}$ is in $\Phi^+_i$.

\begin{notation}
\textup{Let $\ell: W\to \N$ denote the \emph{length function} with respect to
$(W, R)$, that is, for $w\in W$, 
$$\ell(w)=\min\{\, n\in \N\mid \text{ $w=r_1 r_2\cdots r_n$, where $r_1, r_2, \cdots, r_n \in R $  }\,\}.$$}
\end{notation}

A mild generalization of the techniques used in
(\cite[\S 5.6]{HM}) then yields the following connection between the length 
function and the functions $N_1$ and $N_2$:     

\begin{lemma}\textup{(\cite[Lemma 3.8]{FU2})}
\label{lem:simp}
{\rm (i)} $N_1(r_s)=\{\widehat{\alpha_s}\}$ and
$N_2(r_s)=\{\widehat{\beta_s}\}$ for all $s\in S$.
 
\noindent\rlap{\rm (ii)}\qquad Let $w\in W\!$. Then $N_1(w)$ and $N_2(w)$ both
have cardinality~$\ell(w)$.

\noindent\rlap{\rm (iii)}\qquad Let $w_1$, $w_2\in W$ and let $\dotplus $ denote
set symmetric difference. Then 
$
N_i(w_1w_2)=w_2^{-1}N_i(w_1)\dotplus N_i(w_2) 
$ for each $i\in \{1, 2\}$.
\qed
\end{lemma}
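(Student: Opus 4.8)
The plan is to prove the three parts of Lemma~\ref{lem:simp} in order, exploiting the symmetry between the two sides so that I only need to carry out the argument for $N_1$.

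For part~(i), I would compute directly. Given $s \in S$, I need to find all $\widehat{z}$ with $z \in \Phi_1^+$ and $\rho_1(s)z \in \Phi_1^-$. Writing $z = \sum_{r \in S} c_r \alpha_r$ with all $c_r \geq 0$ (not all zero), the formula $\rho_1(s)z = z - 2\langle z, \beta_s\rangle \alpha_s$ changes only the coefficient of $\alpha_s$. So if $z$ has some $c_r > 0$ with $r \neq s$, then $\rho_1(s)z$ still has a strictly positive coefficient on $\alpha_r$, and since $\Phi_1 = \Phi_1^+ \uplus \Phi_1^-$ (one of the equivalent conditions we are assuming throughout Section~\ref{sec:reflection}), it follows that $\rho_1(s)z \in \Phi_1^+$, not $\Phi_1^-$. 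Hence $z$ must be a positive scalar multiple of $\alpha_s$, giving $\widehat{z} = \widehat{\alpha_s}$; and conversely $\rho_1(s)\alpha_s = -\alpha_s \in \Phi_1^-$ does lie in $N_1(r_s)$. The same computation with $\rho_2(s)$ and $\beta_s$ handles $N_2(r_s)$.

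For part~(iii), the cocycle-type identity $N_i(w_1 w_2) = w_2^{-1}N_i(w_1) \dotplus N_i(w_2)$ should follow from the characterization recorded just before the Notation box: $\widehat{z} \in N_i(w)$ if and only if precisely one of $z, wz$ lies in $\Phi_i^+$. I would fix $i$ and $\widehat{z}$, and track which of the four vectors $z$, $w_2 z$, $w_1 w_2 z$ lies in $\Phi_i^+$ (here I use that the $W$-action preserves the partition, which is part of the setup). Observing that $\widehat{z} \in w_2^{-1} N_i(w_1)$ iff $\widehat{w_2 z} \in N_i(w_1)$ iff exactly one of $w_2 z, w_1 w_2 z$ is positive, and $\widehat{z} \in N_i(w_2)$ iff exactly one of $z, w_2 z$ is positive, a short parity bookkeeping on the three sign-values shows that $\widehat{z}$ lies in the symmetric difference iff exactly one of $z, w_1 w_2 z$ is positive, i.e.\ iff $\widehat{z} \in N_i(w_1 w_2)$.

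Part~(ii) is where the real work lies, and I expect it to be the main obstacle. The plan is induction on $\ell(w)$. The base case $\ell(w)=0$ (so $w=1$, $N_i(w)=\emptyset$) is immediate, and $\ell(w)=1$ is part~(i). For the inductive step, write $w = w' r_s$ with $\ell(w') = \ell(w) - 1$; by part~(iii), $N_i(w) = r_s N_i(w') \dotplus N_i(r_s) = r_s N_i(w') \dotplus \{\widehat{\alpha_i^{(s)}}\}$ (using the appropriate simple class for index $i$). Since $r_s$ acts bijectively on $\widehat{\Phi_i}$, the set $r_s N_i(w')$ has cardinality $\ell(w')$ by induction, so it suffices to show $\widehat{\alpha_s^{(i)}} \notin r_s N_i(w')$, which (applying $r_s$) is equivalent to $\widehat{\alpha_s^{(i)}} \notin N_i(w')$, i.e.\ to $\ell(w' r_s) > \ell(w')$ forcing $w' \alpha_s^{(i)} \in \Phi_i^+$. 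This is precisely the standard ``exchange/deletion'' link between the length function and sign changes of the simple root; the delicate point is establishing it in the present generalized setting where simple roots may be linearly dependent and scalar multiples of roots can be roots. I would either invoke the generalized machinery of \cite{FU2} referenced in the lemma's attribution, or reprove the needed implication by a minimal-length argument: if $\ell(w' r_s) < \ell(w')$ then writing a reduced expression for $w'$ and using an exchange condition one exhibits $w'\alpha_s^{(i)} \in \Phi_i^-$, and conversely. Care must be taken that the equivalence relation $\sim_i$ interacts correctly with the $W$-action, which is why the statement is phrased in terms of $\widehat{\Phi_i}$ rather than $\Phi_i$ itself.
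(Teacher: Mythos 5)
The paper offers no proof of this lemma at all: it is quoted verbatim from \cite[Lemma 3.8]{FU2} with only the remark that it follows from ``a mild generalization of the techniques of \cite[\S 5.6]{HM}'', so there is no in-paper argument to compare yours against; your proposal follows exactly that standard Humphreys-style route. Parts (i) and (iii) are essentially right. In (i), note that because $\Pi_1$ may be linearly dependent, ``has a strictly positive coefficient on $\alpha_r$'' is not a well-defined property of a root, so the step ``hence $\rho_1(s)z\in\Phi_1^+$'' should be replaced by the paper's recurring coefficient inspection: if $\rho_1(s)z$ were negative, adding a nonnegative expression for $-\rho_1(s)z$ would exhibit either $0\in\PLC(\Pi_1)$ or $\alpha_s\in\PLC(\Pi_1\setminus\{\alpha_s\})$, contradicting (D2). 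The XOR bookkeeping for (iii) is fine once one checks (as the paper's remark implies) that the criterion ``exactly one of $z, wz$ is positive'' is independent of the representative of $\widehat{z}$.

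The one genuine gap is in (ii), and you have correctly located it: everything reduces to the implication $\ell(wr_s)>\ell(w)\Rightarrow w\alpha_s\in\Phi_1^+$, which you do not prove. Falling back on \cite{FU2} concedes the whole lemma (that is literally what the paper does), and the exchange-condition sketch does not close as stated: chasing the exchanged letter only gives $w\alpha_s=\mu\,r_{s_1}\cdots r_{s_{i-1}}\alpha_{s_i}$ for an unknown nonzero scalar $\mu$ applied to a shorter element, so you are back to controlling signs for shorter words, i.e.\ another induction; worse, many standard proofs of the exchange condition rest on this very sign lemma, so you would need to invoke a purely combinatorial proof of exchange for the abstract system $(W,R)$ to avoid circularity. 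The clean fix inside this paper's framework is the induction used for Proposition~\ref{pp:key}: pick $s'$ with $\ell(r_{s'}w)=\ell(w)-1$, apply the inductive hypothesis to $r_{s'}w$, and use part (i) (already established) to show that $w\alpha_s\in\Phi_1^-$ would force $w\alpha_s\in\mathbb{R}_{<0}\alpha_{s'}$ and hence $wr_sw^{-1}=r_{s'}$, i.e.\ $wr_s=r_{s'}w$, contradicting $\ell(wr_s)>\ell(w)>\ell(r_{s'}w)$. That argument uses only part (i) and elementary length facts, so it slots in between your (i) and (ii) without circularity and completes the proof.
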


The last lemma enables us to deduce the following generalization of \cite[Lemma 3.2 (ii)]{FU2}:
\begin{proposition}
\label{pp:key}
For each $i\in \{1, 2\}$, let $w\in W$ and $x\in \Phi_i^+$. If $\ell(wr_x)>\ell(w)$ 
then $wx\in \Phi_i^+$, whereas if $\ell(wr_x)<\ell(w)$ then $wx\in \Phi_i^-$.
\end{proposition}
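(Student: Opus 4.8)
The plan is to mimic the classical argument connecting length and the sign-change sets $N_i$, using the machinery already assembled in Lemma~\ref{lem:simp} and the characterization of $N_i(w)$ given just before it. Fix $i\in\{1,2\}$, let $w\in W$ and $x\in\Phi_i^+$, and set $t=r_x\in T$; by the remark following Definition~\ref{def:N}, $\widehat{x}\in N_i(t)$, and in fact one checks from Lemma~\ref{lem:simp}(i) (writing $t=usu^{-1}$ with $s\in S$) that $N_i(t)$ is a single class, namely $\widehat{x}$ itself once we note $x$ and $u\alpha_s$ (resp.\ $u\beta_s$) may be taken to represent the same class. The key identity is Lemma~\ref{lem:simp}(iii): $N_i(wt)=t^{-1}N_i(w)\dotplus N_i(t)=t\,N_i(w)\dotplus\{\widehat{x}\}$, using $t=t^{-1}$. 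Taking cardinalities and invoking Lemma~\ref{lem:simp}(ii), $\ell(wt)=\ell(w)\pm1$ according to whether $\widehat{x}\in t\,N_i(w)$ or not, equivalently (since $t$ permutes $\widehat{\Phi_i}$ and $t\widehat{x}=\widehat{x}$) according to whether $\widehat{x}\in N_i(w)$.

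So the first step is to establish the equivalence $\ell(wr_x)<\ell(w)\iff \widehat{x}\in N_i(w)$ purely from Lemma~\ref{lem:simp}; this is the heart of the matter and should be short. The second step translates $\widehat{x}\in N_i(w)$ back into a statement about $wx$: by the displayed characterization after Definition~\ref{def:N}, $\widehat{x}\in N_i(w)$ means precisely one of $x$, $wx$ lies in $\Phi_i^+$, and since $x\in\Phi_i^+$ by hypothesis, this says exactly $wx\in\Phi_i^-$. Conversely, if $\ell(wr_x)>\ell(w)$ then $\widehat{x}\notin N_i(w)$, so (again since $x\in\Phi_i^+$) we must have $wx\in\Phi_i^+$. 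This handles both clauses of the proposition simultaneously. Here I should be slightly careful that $wx\in\Phi_i=\Phi_i^+\uplus\Phi_i^-$ really is a dichotomy --- but that is guaranteed because $\mathscr{C}$ satisfies (D1)--(D5), so Theorem~\ref{th:datum} gives $\Phi_i=\Phi_i^+\uplus\Phi_i^-$.

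The step I expect to be the main obstacle is the clean verification that $N_i(r_x)=\{\widehat{x}\}$ for an \emph{arbitrary} reflection $r_x$, not just for $r_s$ with $s\in S$. Lemma~\ref{lem:simp}(i) only states it for the generators; to bootstrap to a general $t=w'r_sw'^{-1}$ one uses Lemma~\ref{lem:simp}(iii) twice together with the $W$-action on $\widehat{\Phi_i}$, or alternatively one notes $\ell(t)$ is odd and $N_i(t)$ has odd cardinality with $\widehat{x}\in N_i(t)$, then argues minimality. In fact the slickest route avoids this entirely: it suffices to know $\widehat{x}\in N_i(r_x)$ (immediate) and that $\ell(wr_x)\equiv\ell(w)+1\pmod 2$, hence $|N_i(wr_x)|$ and $|N_i(w)|$ differ in parity, forcing $\ell(wr_x)=\ell(w)\pm1$; then the symmetric-difference formula pins down the sign via membership of $\widehat{x}$. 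I will write the proof along these lines, keeping the parity observation explicit so the argument does not secretly depend on $N_i(r_x)$ being a singleton.

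\begin{proof}
Fix $i\in\{1,2\}$, and let $w\in W$, $x\in\Phi_i^+$. Write $t=r_x\in T$, so that $t=t^{-1}$ and, by the action of $W$ on $\widehat{\Phi_i}$, $t\widehat{x}=\widehat{x}$. From the characterization of $N_i$ recorded after Definition~\ref{def:N}, $\widehat{z}\in N_i(u)$ if and only if exactly one of $z,uz$ lies in $\Phi_i^+$; in particular, taking $u=t$ and $z=x$, and using $tx=-x\in\Phi_i^-$ (which holds since $x\in\Phi_i^+$ and, by the Remark following Definition~\ref{def:start}, $r_x$ negates $x$), we get $\widehat{x}\in N_i(t)$.

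By Lemma~\ref{lem:simp}(iii), $N_i(wt)=t^{-1}N_i(w)\dotplus N_i(t)=t\,N_i(w)\dotplus N_i(t)$. Since $|N_i(t)|=\ell(t)$ is odd and $t$ acts as a bijection on $\widehat{\Phi_i}$, passing to cardinalities modulo $2$ gives $\ell(wt)=|N_i(wt)|\equiv|N_i(w)|+|N_i(t)|\equiv\ell(w)+1\pmod 2$, using Lemma~\ref{lem:simp}(ii). Hence $\ell(wt)=\ell(w)\pm1$.

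Now $\widehat{x}\in N_i(wt)$ if and only if $\widehat{x}$ lies in exactly one of $t\,N_i(w)$ and $N_i(t)$. Since $\widehat{x}\in N_i(t)$ and $t\widehat{x}=\widehat{x}$, this says $\widehat{x}\in N_i(wt)$ if and only if $\widehat{x}\notin N_i(w)$. But by the same characterization as above, $\widehat{x}\in N_i(wt)$ means exactly one of $x$, $wtx=-wx$ lies in $\Phi_i^+$, i.e.\ exactly one of $x$, $wx$ lies in $\Phi_i^-$; as $x\in\Phi_i^+$ this holds if and only if $wx\in\Phi_i^-$. Similarly $\widehat{x}\in N_i(w)$ if and only if exactly one of $x$, $wx$ is in $\Phi_i^+$, i.e.\ (again since $x\in\Phi_i^+$) if and only if $wx\in\Phi_i^-$. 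Combining, we find $wx\in\Phi_i^-$ is equivalent to $wx\notin\Phi_i^-$ being false on the $wt$ side and true on the $w$ side --- more precisely, we have shown
\[
wx\in\Phi_i^-\iff \widehat{x}\in N_i(w),\qquad wx\in\Phi_i^-\iff \widehat{x}\notin N_i(wt).
\]
Because $\mathscr{C}$ satisfies (D1)--(D5), Theorem~\ref{th:datum} gives $\Phi_i=\Phi_i^+\uplus\Phi_i^-$, so exactly one of $wx\in\Phi_i^+$, $wx\in\Phi_i^-$ holds.

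Finally we relate this to lengths. From $N_i(wt)=t\,N_i(w)\dotplus N_i(t)$ and the fact that $t$ is a bijection on $\widehat{\Phi_i}$ fixing $\widehat{x}$, the class $\widehat{x}$ is removed from (rather than added to) the symmetric difference precisely when $\widehat{x}\in t\,N_i(w)\cap N_i(t)$, i.e.\ when $\widehat{x}\in N_i(w)$; in that case $|N_i(wt)|\le|N_i(w)|$, forcing (by the parity relation) $\ell(wt)=\ell(w)-1$. If instead $\widehat{x}\notin N_i(w)$, then $\widehat{x}$ is added, and $\ell(wt)=\ell(w)+1$. Hence $\ell(wr_x)<\ell(w)$ if and only if $\widehat{x}\in N_i(w)$, if and only if $wx\in\Phi_i^-$; and $\ell(wr_x)>\ell(w)$ if and only if $\widehat{x}\notin N_i(w)$, if and only if $wx\in\Phi_i^+$. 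This is exactly the assertion of the proposition.
\end{proof}
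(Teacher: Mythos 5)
Your reduction of the proposition to the equivalence $\ell(wr_x)<\ell(w)\iff\widehat{x}\in N_i(w)$ is a reasonable strategy, and the translation between $\widehat{x}\in N_i(w)$ and $wx\in\Phi_i^-$ is indeed just the definition; the problem is that your proof of the equivalence itself does not go through. In the final paragraph you argue that when $\widehat{x}\in t\,N_i(w)\cap N_i(t)$ the class $\widehat{x}$ is ``removed'' from the symmetric difference and therefore $|N_i(wt)|\le|N_i(w)|$. That inference is false: $N_i(wt)=t\,N_i(w)\dotplus N_i(t)$ also acquires every element of $N_i(t)\setminus t\,N_i(w)$, and $|N_i(t)|=\ell(t)=\ell(r_x)$ is in general much larger than $1$ (it is at least $3$ whenever $x$ is not a scalar multiple of a simple root), so losing the single class $\widehat{x}$ is perfectly compatible with $|N_i(wt)|>|N_i(w)|$. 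Concretely, if $\widehat{x}$ is the only element of $t\,N_i(w)\cap N_i(t)$, then $|N_i(wt)|=|N_i(w)|+\ell(t)-2$, which exceeds $|N_i(w)|$ as soon as $\ell(t)\ge 3$. The parity observation only shows $\ell(wt)\ne\ell(w)$; it cannot determine the direction of the change. You flagged in your preamble that the argument must not secretly depend on $N_i(r_x)$ being a singleton, but the last step does exactly that --- and the earlier claim that $N_i(t)$ is a single class for arbitrary $t\in T$ is false: Lemma~\ref{lem:simp}(i) gives this only for the generators $r_s$, $s\in S$.

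The equivalence you want is genuinely the content of the proposition and needs more than the cocycle formula. The paper proves it by induction on $\ell(w)$: choose $s\in S$ with $\ell(r_sw)=\ell(w)-1$, check that $\ell((r_sw)r_x)>\ell(r_sw)$, apply the inductive hypothesis to conclude $(r_sw)x\in\Phi_1^+$, and then rule out $wx\in\Phi_1^-$ by observing that this would force $\widehat{wx}\in N_1(r_s)=\{\widehat{\alpha_s}\}$, hence $wx=-\lambda\alpha_s$, hence $(r_sw)r_x(r_sw)^{-1}=r_s$ and $wr_x=r_sw$, contradicting $\ell(wr_x)>\ell(w)>\ell(r_sw)$. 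There the singleton property of $N_1$ is invoked only for a simple reflection, where it is legitimate. To repair your write-up you would either have to carry out such an induction, or prove separately that strictly more than half of $N_i(t)$ lies in $t\,N_i(w)$ whenever $\widehat{x}\in N_i(w)$ --- which is not easier than the proposition itself.
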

\begin{proof}
We prove the statement that $\ell(wr_x)>\ell(w)$ if and only if $wx$ is positive in the case 
$x\in \Phi_1$, and again we stress that a similar argument also 
shows the desired result holds in $\Phi_2$.

Observe that the second statement follows from the first, applied to $w r_x$ in place 
of $w$: indeed if $\ell(wr_x)<\ell(w)$ then $\ell((wr_x)r_x)>\ell(wr_x)$, forcing 
$(wr_x)x=w(r_x x)=-wx\in \Phi_1^+$, that is, $wx\in \Phi_1^-$.

Now we prove the first statement in $\Phi_1$. Proceed by induction on 
$\ell(w)$, the case $\ell(w)=0$ being trivial. If $\ell(w)>0$, then there exists $s\in S$ 
with $\ell(r_sw)=\ell(w)-1$, and hence
$$
\ell((r_s w)r_x)=\ell(r_s(wr_x))\geq \ell(w r_x)-1>\ell(w)-1=\ell(r_s w).
$$
Then the inductive hypothesis yields that $(r_sw)x\in \Phi_1^+$. 
Suppose for a contradiction that $wx\in \Phi_1^-$. Then $\widehat{wx}\in N_1(r_s)$ 
and Lemma~\ref{lem:simp}~(i) yields that $wx=-\lambda \alpha_s$ for some $\lambda>0$. 
But then $r_s w x =\lambda \alpha_s$, and hence $(r_s w)r_x(r_s w)^{-1}=r_s$ 
by calculations similar to (\ref{eq:ref3}) and (\ref{eq:ref4}). But this yields that $w r_x =r_s w$, 
contradicting $\ell(wr_x)>\ell(w)>\ell(r_s w)$, as desired.
\end{proof} 

\begin{definition}
 For each $w\in W$, define 
$$\overline{N}(w):=\{\, t\in T\mid \ell(wt)<\ell(w)\,\}.$$ 
\end{definition}

If $t\in T$ then $t=w r_s w^{-1}$ for some $w\in W$ and $s\in S$, and hence it follows from 
calculations like (\ref{eq:ref3}) and (\ref{eq:ref4}) that $t=r_{w\alpha_s}=r_{w\beta_s}$. 
This combined with Proposition~\ref{pp:key} give us:

\begin{proposition}
 \label{pp:equal}
Let $w\in W$. Then 
$$\overline{N}(w)=\{\, r_x\mid \widehat{x}\in N_i(w)\,\}$$
for each $i\in \{1, 2\}$.
\qed
\end{proposition}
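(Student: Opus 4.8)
The plan is to establish the two inclusions defining the asserted equality of sets, using Proposition~\ref{pp:key} together with the remark preceding the statement (that every $t\in T$ can be written as $r_x = r_{w\alpha_s} = r_{w\beta_s}$ for suitable $w\in W$, $s\in S$). Fix $i\in\{1,2\}$; the argument is symmetric in $i$, so I would phrase it for a generic $i$ and not repeat it. First I would take $t\in\overline{N}(w)$, so $\ell(wt)<\ell(w)$. Writing $t=r_x$ for some $x\in\Phi_i$, I may assume after possibly replacing $x$ by $-x$ that $x\in\Phi_i^+$ (this is legitimate since $r_x=r_{-x}$ and $\widehat{x}=\widehat{-x}$). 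Since $\ell(wr_x)<\ell(w)$, Proposition~\ref{pp:key} gives $wx\in\Phi_i^-$, so exactly one of $x,wx$ lies in $\Phi_i^+$; hence $\widehat{x}\in N_i(w)$ by Definition~\ref{def:N} (and the characterization recorded immediately after it). Thus $t=r_x$ with $\widehat{x}\in N_i(w)$, giving $\overline{N}(w)\subseteq\{\,r_x\mid\widehat{x}\in N_i(w)\,\}$.

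For the reverse inclusion I would take $x\in\Phi_i$ with $\widehat{x}\in N_i(w)$ and show $r_x\in\overline{N}(w)$, i.e.\ $\ell(wr_x)<\ell(w)$. Again replacing $x$ by $-x$ if necessary, I may take $x\in\Phi_i^+$; then $\widehat{x}\in N_i(w)$ forces $wx\in\Phi_i^-$. If we had $\ell(wr_x)>\ell(w)$, then Proposition~\ref{pp:key} would give $wx\in\Phi_i^+$, a contradiction. Since $\ell(wr_x)\neq\ell(w)$ always holds for $r_x\in T$ (reflections have odd length, or more simply $wr_x\neq w$ and the two lengths differ in parity), we conclude $\ell(wr_x)<\ell(w)$, i.e.\ $r_x\in\overline{N}(w)$. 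One should note that $r_x$ is well defined on equivalence classes: if $\widehat{x}=\widehat{x'}$ then $r_x=r_{x'}$ by the remark after Definition~\ref{def:N}, so the set $\{\,r_x\mid\widehat{x}\in N_i(w)\,\}$ is unambiguous.

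Finally, the two inclusions combine to give $\overline{N}(w)=\{\,r_x\mid\widehat{x}\in N_i(w)\,\}$ for each $i\in\{1,2\}$; since the left-hand side does not depend on $i$, this also re-proves that $|N_1(w)|=|N_2(w)|$, consistent with Lemma~\ref{lem:simp}~(ii). The only mild subtlety — and the point I would be most careful about — is the parity/inequality claim $\ell(wr_x)\neq\ell(w)$, which guarantees that ``not $>$'' upgrades to ``$<$''; this is standard for Coxeter systems (a reflection always changes length parity), but since the present setup derives the Coxeter system $(W,R)$ from the datum $\mathscr{C}$ via Theorem~\ref{th:key}, I would cite the corresponding basic fact for $(W,R)$ rather than reprove it. Everything else is a direct application of Proposition~\ref{pp:key} and the bookkeeping of equivalence classes.
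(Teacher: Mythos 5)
Your proof is correct and follows essentially the same route as the paper, which derives the proposition directly from Proposition~\ref{pp:key} together with the observation that every $t\in T$ has the form $r_{w\alpha_s}=r_{w\beta_s}$. Your extra care about the trichotomy (ruling out $\ell(wr_x)=\ell(w)$ via the parity of reflection lengths) is a detail the paper leaves implicit, and it is handled correctly.
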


\begin{definition}
Suppose that $W'$ is a reflection subgroup. Then we define
\begin{align*}
S(W')&:=\{\, t\in T\mid \overline{N}(t)\cap W'=\{t\}\,\}\\
\noalign{\hbox{and}}
\Delta_i(W')&:=\{x\in \Phi^+_i\mid r_x\in S(W')\,\}
\end{align*}
for each $i\in \{1, 2\}$.
\end{definition}

For a reflection subgroup $W'$, the set $S(W')$ is called the \emph{canonical generators} of $W'$ in \cite{MD87}, 
and it is well known that $(W', S(W'))$ is a Coxeter system. Indeed, we have:
\begin{lemma} \textup{\cite{MD87}}
\label{lem:dyer}
Let $W'$ be a reflection subgroup of $W$.

\noindent\rm{(i)}\quad If $t\in W' \cap T$, 
then there exist $m\in \N$ and $t_0, \cdots, t_m \in S(W')$ 
such that $t= t_m \cdots t_1 t_0 t_1 \cdots t_m$.

\noindent\rm{(ii)}\quad  $(W', S(W'))$ is a Coxeter system.
\end{lemma}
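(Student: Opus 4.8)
The plan is to prove this result (Lemma~\ref{lem:dyer}) by mimicking Dyer's original argument, reorganized so as to make use of the machinery on $N_i$ and $\overline{N}$ already developed in this section. The key external input we may invoke is Proposition~\ref{pp:equal}, which identifies $\overline{N}(w)$ with $\{\, r_x \mid \widehat{x} \in N_i(w)\,\}$, and Lemma~\ref{lem:simp}, especially the ``cocycle'' identity (iii) for $N_i$ under products and the cardinality statement (ii). The first step is to set up a length function on $W'$: for $t \in W' \cap T$, one considers the quantity $\ell'(t)$ defined as the minimal number of generators from $S(W')$ needed to write $t$ (once we know $S(W')$ generates $W'$), or more robustly, one argues directly with the $W$-length restricted to $W'$. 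For part~(i), I would proceed by induction on $\ell(t)$ (the length of $t$ as an element of the over-group $W$). If $t \in S(W')$ we are done with $m = 0$. Otherwise, by definition of $S(W')$ there exists some reflection $t' \in \overline{N}(t) \cap W'$ with $t' \neq t$; the point is to choose $t'$ carefully — ideally with $\ell(t')$ minimal among such, so that $t' \in S(W')$ — and then to show that the conjugate $t'' := t' t t'$ lies in $W' \cap T$ and satisfies $\ell(t'') < \ell(t)$, allowing the inductive hypothesis to apply to $t''$ and then conjugating back by $t'$.

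The main obstacle will be verifying that $\ell(t' t t') < \ell(t)$ for the chosen $t'$, i.e.\ that conjugation by a suitable canonical generator strictly decreases length. This is the technical heart of Dyer's argument. I would establish it using the $N_i$-calculus: since $t' \in \overline{N}(t)$ we have $\ell(t t') < \ell(t)$, hence $\ell(t t') = \ell(t) - 1$ (reflections change length by exactly an odd amount, but one needs the exchange-condition-type statement, which follows from Proposition~\ref{pp:key} together with Lemma~\ref{lem:simp}). Then one must show $\ell(t' \cdot t t') \le \ell(t t')$, i.e.\ that $t'$ also lies in $\overline{N}\big((tt')^{-1}\big) = \overline{N}(t' t)$ — equivalently that $\widehat{\alpha}$ (where $r_\alpha = t'$) lies in the appropriate $N_i$ set — which is where the minimality in the choice of $t'$, forcing $t' \in S(W')$, gets used: a canonical generator cannot be ``skipped over''. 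Packaging this cleanly: for $t' \in S(W')$ and $t \in W' \cap T$ with $\ell(tt') < \ell(t)$, one shows $\ell(t' t t') \le \ell(t) - 2 < \ell(t)$ unless $t' t t' = t$ already (in which case $t$ commutes with $t'$ and one handles it separately), by a careful count of the symmetric differences of the $N_i$ sets via Lemma~\ref{lem:simp}~(iii).

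For part~(ii), once (i) is in hand it follows that $W' = \langle S(W')\rangle$, so it remains to check the defining presentation property, i.e.\ that there is no non-Coxeter relation among the elements of $S(W')$. The standard way, and the one I would follow, is to verify that $(W', S(W'))$ satisfies the \emph{strong exchange condition} (or, equivalently, Matsumoto's condition, or the condition that each $s \in S(W')$ satisfies $\ell'(sw') \neq \ell'(w')$ with a well-defined parity), using the function $N_1$ restricted to $W'$ as a surrogate length-cocycle: define $N'_1(w') := \{\, \widehat{x} \mid x \in \Phi_1(W') \cap \Phi_1^+,\ w'x \in \Phi_1^- \,\}$ for $w' \in W'$, show via Lemma~\ref{lem:simp}~(iii) that $N'_1$ satisfies the same cocycle identity within $W'$, show $N'_1(s) = \{\widehat{\alpha_s^{W'}}\}$ is a singleton for each canonical generator $s$ (this is essentially the definition of $S(W')$ unwound through Proposition~\ref{pp:equal}), and then invoke the standard abstract criterion (as in \cite[\S 5.6]{HM}, the ``$N$-function implies Coxeter system'' direction, e.g.\ the characterization via Deletion/Exchange) to conclude that $(W', S(W'))$ is a Coxeter system. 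The only subtlety here is that elements of $\Phi_i$ need not biject with reflections (distinct roots may be proportional), which is precisely why the argument is phrased throughout in terms of the classes $\widehat{z}$ and the sets $N_i$ rather than roots directly; having set things up that way from the start, the abstract criterion applies without change. I expect part~(ii) to be largely formal once part~(i) and the cocycle property of $N'_1$ are established, with the genuine difficulty concentrated in the length-decrease estimate of part~(i).
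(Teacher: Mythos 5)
The paper itself offers no proof of Lemma~\ref{lem:dyer}: both parts are cited directly from Dyer's thesis (\cite[Lemma~(1.7)(ii) and Theorem~(1.8)(i)]{MD87}). Your reconstruction is therefore necessarily a different route from the paper's, and its overall shape --- induct on $\ell(t)$ for part~(i) via a length-decreasing conjugation, then deduce part~(ii) from an abstract ``cocycle implies Coxeter system'' criterion applied to $N_1$ restricted to $W'$ --- is indeed the shape of Dyer's argument. The part~(ii) reduction is sound in outline (the paper's later Lemma~\ref{lem:N_i'} and Lemma~\ref{lem:length} develop exactly the restricted cocycle you describe, although there they are stated after, and partly depend on, Lemma~\ref{lem:dyer}, so you would need to re-derive the cocycle identity independently to avoid circularity).

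Part~(i), however, has a genuine gap at precisely the point you flag as the heart of the matter, and the gap is not merely one of detail. First, you propose to pick $t'\in(\overline{N}(t)\cap W')\setminus\{t\}$ of minimal length and assert that this forces $t'\in S(W')$; no argument is given, and it is not clear why a shortest element of $\overline{N}(t)\cap W'$ should itself satisfy $\overline{N}(t')\cap W'=\{t'\}$, since $\overline{N}(t')$ and $\overline{N}(t)$ are not nested in any evident way. Second, even granting some $t'$ with $\ell(t'tt')<\ell(t)$, your induction closes only if either $t'\in S(W')$ (to build the palindromic word) or $\ell(t')<\ell(t)$ (to induct on $t'$ as well); neither is established, and the latter fails in general --- for a reflection $t$, the set $\overline{N}(t)$ can contain reflections strictly longer than $t$ (in the infinite dihedral group $\langle a,b\rangle$ with $t=ababa$ of length $5$, $\overline{N}(t)$ contains $ababababa$ of length $9$, which moreover lies in the reflection subgroup $\langle a, bab\rangle$). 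Finally, the key inequality $\ell(t'tt')\leq\ell(t)-2$ is only gestured at (``a careful count of the symmetric differences''); in Dyer's treatment this is a separate nontrivial lemma proved by a root-coefficient argument, and without it the induction does not start. Since part~(ii) needs $W'=\langle S(W')\rangle$, i.e.\ the generation content of part~(i), the gap propagates to part~(ii) as well.
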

\begin{proof}
 (i):\quad \cite[Lemma (1.7) (ii)]{MD87}.

 \noindent\rm{(ii)]}:\quad \textup{\cite[Theorem (1.8) (i)]{MD87}}.
\end{proof}

For a reflection subgroup $W'$, we will show that 
$\Delta_1(W')$ and $\Delta_2(W')$ can be characterized in terms of a suitably defined Coxeter datum. 
Before we could prove this, we need a number of simple observations.

Observe that for a reflection subgroup $W'$ we can equivalently 
define $\Delta_i(W')$ by requiring 
\begin{equation}
\label{eq:alt}
 \Delta_i(W'):=\{\,x\in \Phi^+_i\mid N_i(r_x)\cap \widehat{\Phi_i(W')}=\{\widehat{x}\}\,\}.
\end{equation}

Suppose that $\Delta'_1\subseteq \Phi_1^+$ and $\Delta'_2\subseteq \Phi_2^+$ are two sets of roots
with $\phi(\Delta'_1)=\Delta'_2$ (where $\phi$ is as in Lemma~\ref{lem:equivariant}~(iii)). Furthermore, 
suppose that  $\Delta'_1$ and  $\Delta'_2$ satisfy the following:
\begin{itemize}
 \item [(i')] $\langle x, \phi(x')\rangle \leq 0$, for all distinct $x, x'\in \Delta'_1$;
 \item [(ii')] $\langle x, \phi(x')\rangle \langle x', \phi(x)\rangle \in 
               \{\, \cos^2(\pi/m) \mid m\in \N, m\geq 2\,\}\cup [1, \infty)$, for all
              $x, x'\in \Delta'_1$ with $r_x\neq r_{x'}$.
\end{itemize}
It follows from Lemma \ref{lem:equivariant} that
\begin{equation}
 \label{eq:D1}
\langle x, \phi(x)\rangle =1, \text{ for all
$x\in \Delta'_1$}.
\end{equation}
Since $\Delta'_1\subseteq \PLC(\Pi_1)$ and $\Delta'_2\subseteq \PLC(\Pi_2)$, it follows 
that $0\notin \PLC(\Delta'_1)$ and $0\notin \PLC(\Delta'_2)$. Also it can be readily 
checked from (i'), (ii') and (\ref{eq:D1}) that $x\notin \PLC(\Delta'_1\setminus\{x\})$ and
$\phi(x)\notin \PLC(\Delta'_2\setminus\{\phi(x)\})$ for all $x\in \Delta'_1$. 
Furthermore, Lemma~\ref{lem:equivariant}~(iii) ensures that $\langle x, \phi(x')\rangle =0$
whenever $\langle x', \phi(x)\rangle =0$ for all $x, x'\in\Delta'_1\subseteq \Phi_1$, 
Thus $\Delta'_1$ and $\Delta'_2$ satisfy conditions (D1) to (D5) of the present paper inclusive. If we let 
$S'$ be an indexing set for both $\Delta'_1$ and $\Delta'_2$ then
$$\mathscr{C'}:=(\,S', \spa(\Delta'_1), \spa(\Delta'_2), \Delta'_1, \Delta'_2, \langle\,,\,\rangle'     \,),$$
(where $\langle\,,\,\rangle'$ denotes the restriction of $\langle\,,\,\rangle$ to 
$\spa(\Delta'_1)\times \spa(\Delta'_2)$) constitutes a Coxeter datum in the sense of \cite{FU2}.
Now if we let $R':=\{\, r_x\mid x\in \Delta'_1\,\} (=\{\, r_y\mid y\in \Delta'_2\,\})$, and set $W'=\langle R'\rangle$, 
then it is clear that $W'$ is a reflection subgroup of $W$. Furthermore, it follows from 
Theorem~\ref{th:key} that $(W', R')$ forms a Coxeter system. Then upon applying Lemma~\ref{lem:simp} and (\ref{eq:alt}) to 
$\mathscr{C'}$ and $W'$ we may conclude that $S(W')=R'$ and consequently $\widehat{\Delta_1(W')}=\widehat{\Delta'_1}$
and $\widehat{\Delta_2(W')}=\widehat{\Delta'_2}$. Summing up, we have:

\begin{proposition}
 \label{pp:can1}
Suppose that $\Delta'_1\subseteq \Phi_1^+$ and $\Delta'_2\subseteq \Phi_2^+$ such that

 \noindent \rm{(A1)}\quad $\phi(\Delta'_1)=\Delta'_2$;
 
 \noindent \rm{(A2)}\quad $\langle x, \phi(x')\rangle \leq 0$, for all distinct $x, x'\in \Delta'_1$;
  
 \noindent \rm{(A3)}\quad $\langle x, \phi(x')\rangle \langle x', \phi(x)\rangle \in 
               \{\, \cos^2(\pi/m) \mid m\in \N, m\geq 2\,\}\cup [1, \infty)$, for all
              $x, x'\in \Delta'_1$ with $r_x\neq r_{x'}$.

\noindent Then $W'=\langle\{\, r_x\mid x\in \Delta'_1\,\}\rangle$ is a reflection subgroup of $W$ with
$\widehat{\Delta'_1}=\widehat{\Delta_1(W')}$ and $\widehat{\Delta'_2}=\widehat{\Delta_2(W')}$.
\qed
\end{proposition}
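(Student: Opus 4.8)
The plan is to verify the chain of reasoning already sketched in the paragraph immediately preceding the statement, filling in the routine checks so that the three hypotheses (A1)--(A3) are exactly the data needed to build a Coxeter datum out of $\Delta'_1$ and $\Delta'_2$, and then invoke the machinery of Section~\ref{sec:prr} together with Lemma~\ref{lem:simp} to identify $S(W')$. First I would fix an indexing set $S'$ for $\Delta'_1$ (equivalently, via (A1), for $\Delta'_2$), and establish condition (D1) for the pair $(\Delta'_1, \Delta'_2)$: since each $x\in\Delta'_1$ lies in $\Phi_1$, there is $w\in W$ and $s\in S$ with $x=w\alpha_s$, whence $\phi(x)=w\beta_s$ by the $W$-equivariance of $\phi$, and then $\langle x,\phi(x)\rangle=\langle w\alpha_s,w\beta_s\rangle=\langle\alpha_s,\beta_s\rangle=1$ by Lemma~\ref{lem:equivariant}~(i) and (D1). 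Next I would check (D2): the inclusions $\Delta'_1\subseteq\PLC(\Pi_1)$ and $\Delta'_2\subseteq\PLC(\Pi_2)$ force $0\notin\PLC(\Delta'_1)$ and $0\notin\PLC(\Delta'_2)$ directly from (D2)(i) for the ambient datum; and the second half, $x\notin\PLC(\Delta'_1\setminus\{x\})$, follows from (A2), (A3) and \eqref{eq:D1} by the same coefficient-inspection argument used in the proof of Theorem~\ref{th:datum} (this is essentially \cite[Lemma 2.5]{FU2}, as noted in the remark before Proposition~\ref{pp:eqv}). Conditions (D3) and (D5) are precisely (A2) and (A3), while (D4) is supplied by Lemma~\ref{lem:equivariant}~(iii) applied to elements of $\Delta'_1\subseteq\Phi_1$.

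Having verified (D1)--(D5), I would set
$$\mathscr{C'}:=(\,S', \spa(\Delta'_1), \spa(\Delta'_2), \Delta'_1, \Delta'_2, \langle\,,\,\rangle'\,),$$
with $\langle\,,\,\rangle'$ the restriction of the ambient pairing, and observe (Proposition~\ref{pp:eqv}) that $\mathscr{C'}$ is a Coxeter datum in the sense of \cite{FU2}. Put $R':=\{\,r_x\mid x\in\Delta'_1\,\}=\{\,r_y\mid y\in\Delta'_2\,\}$ (the two sets agree because $\widehat{x}=\widehat{\phi(x)}$ is not literally true, but $r_{w\alpha_s}=r_{w\beta_s}$ as recorded just after Proposition~\ref{pp:key}) and $W':=\langle R'\rangle$. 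Since every $r_x$ with $x\in\Delta'_1\subseteq\Phi_1$ lies in $T$, the subgroup $W'$ is by definition a reflection subgroup of $W$. Theorem~\ref{th:key}, applied to $\mathscr{C'}$, then tells us that $(W',R')$ is a Coxeter system in which the $r_x$ play the role of the Coxeter generators $\rho_i(\,\cdot\,)$; in particular the length function, the sets $N_i(\,\cdot\,)$, and Lemma~\ref{lem:simp} are all available internally to $(W',R')$, with the root system $\Phi_i(\mathscr{C'})$ realized inside $\spa(\Delta'_i)\subseteq V_i$ and its positive cone sitting inside $\PLC(\Delta'_i)\subseteq\PLC(\Pi_i)=\Phi^+_i$.

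The remaining point is to identify $S(W')$ with $R'$ and thereby conclude $\widehat{\Delta_1(W')}=\widehat{\Delta'_1}$, $\widehat{\Delta_2(W')}=\widehat{\Delta'_2}$ via the alternative description \eqref{eq:alt}. For one inclusion: if $x\in\Delta'_1$ then inside $(W',R')$ Lemma~\ref{lem:simp}~(i) gives that the only element $\widehat{z}$ of $\widehat{\Phi_1(W')}$ with $z\in\Phi^+_1$ and $r_x z\in\Phi^-_1$ is $\widehat{x}$ itself (one must check that positivity computed inside $\mathscr{C'}$ matches positivity in the ambient datum, which holds because $\PLC(\Delta'_1)\subseteq\PLC(\Pi_1)$ and an element of $\Phi_1(W')$ is positive in $\mathscr{C}$ iff positive in $\mathscr{C'}$ — this uses the decomposition $\Phi_1(W')=\Phi_1(W')^+\uplus\Phi_1(W')^-$ guaranteed by Theorem~\ref{th:datum} applied to $\mathscr{C'}$), so $N_1(r_x)\cap\widehat{\Phi_1(W')}=\{\widehat x\}$, i.e. $x\in\Delta_1(W')$, hence $r_x\in S(W')$. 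For the reverse inclusion one uses Lemma~\ref{lem:dyer}~(i): any $t\in W'\cap T$ is a conjugate $t=u r_x u^{-1}$ of some generator $r_x$, $x\in\Delta'_1$, by elements of $W'$, and a standard length argument (Proposition~\ref{pp:key} relative to $(W',R')$, together with Proposition~\ref{pp:equal}) shows $\overline{N}(t)\cap W'=\{t\}$ forces $t$ to be $W'$-conjugate to a generator by length $0$, i.e. $t=r_x$ for some $x\in\Delta'_1$; hence $S(W')\subseteq R'$. The main obstacle I anticipate is precisely this bookkeeping about \emph{which} positive cone is used: one must be careful that $\Phi^+_i$, $N_i$, $\overline{N}$ and $S(W')$ — all defined via the \emph{ambient} $(W,R)$ — agree with their analogues computed internally inside the Coxeter system $(W',R')$ obtained from $\mathscr{C'}$, since $(W',R')$ may have strictly more Coxeter generators than $(W,R)$ and its simple roots $\Delta'_i$ need not be linearly independent. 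Once the compatibility of the two notions of positivity on $\Phi_i(W')$ is nailed down (via Theorem~\ref{th:datum} for $\mathscr{C'}$ plus $\PLC(\Delta'_i)\subseteq\PLC(\Pi_i)$), the equalities $\widehat{\Delta'_i}=\widehat{\Delta_i(W')}$ drop out of Lemma~\ref{lem:simp} and \eqref{eq:alt} as claimed.
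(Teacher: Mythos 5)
Your proposal follows essentially the same route as the paper: verify (D1)--(D5) for the pair $(\Delta'_1,\Delta'_2)$ using Lemma~\ref{lem:equivariant} and (A1)--(A3), package them into the Coxeter datum $\mathscr{C'}$ on $\spa(\Delta'_1)\times\spa(\Delta'_2)$, invoke Theorem~\ref{th:key} (via Proposition~\ref{pp:eqv}) to get the Coxeter system $(W',R')$, and then identify $S(W')$ with $R'$ through Lemma~\ref{lem:simp} and the alternative description \eqref{eq:alt} of $\Delta_i(W')$. The extra care you take over reconciling positivity in $\mathscr{C'}$ with positivity in the ambient datum is a point the paper passes over silently, and your resolution of it (via $\PLC(\Delta'_i)\subseteq\PLC(\Pi_i)$ and the decomposition from Theorem~\ref{th:datum} applied to $\mathscr{C'}$) is sound.
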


It turns out that the converse of Proposition \ref{pp:can1} is also true, namely: if $W'$ is a reflection 
subgroup of $W$, and if $x, x'\in \Delta_1(W')$ with $r_x \neq r_{x'}$, then 
conditions (A2) and (A3) of Proposition~\ref{pp:can1} must be satisfied. Since 
Lemma~\ref{lem:equivariant}~(iii) ensures that $\langle x, \phi(x')\rangle =0$
if and only if $\langle x' \phi(x)\rangle =0$, it follows from this assertion and 
a quick argument similar to the one used immediately after (\ref{eq:D1}) that 
representative elements from $\Delta_1(W')$ and $\Delta_2(W')$ can be used to form a Coxeter datum for $W'$.
Hence this assertion and Proposition~\ref{pp:can1} together yield
that for a reflection subgroup $W'$, the corresponding $\Delta_i(W')$ ($i=1, 2$) can be 
characterized by a suitable Coxeter datum.
We devote the rest of this section to a proof of this assertion.

\begin{lemma}
\label{lem:dfacts}
Let $W'$ be a reflection subgroup of $W$.

\noindent{(i)} For each $i\in\{1,2\}$, let $x\in \Pi_i\setminus \Phi_i(W')$. Then
$\Delta_i(r_x W' r_x)=r_x \Delta_i(W')$.

\noindent{(ii)} For each $i\in \{1,2\}$, $\Phi_i(W')=W'\Delta_i(W')$.
\end{lemma}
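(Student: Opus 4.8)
The plan is to prove both parts by exploiting the explicit description of $\Delta_i(W')$ via the condition $\overline{N}(t)\cap W'=\{t\}$ together with Proposition~\ref{pp:equal}, which translates this length-theoretic condition into the root-theoretic condition \eqref{eq:alt}, and Lemma~\ref{lem:simp}~(iii), which controls how $N_i$ behaves under multiplication.

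For part~(i), fix $x\in\Pi_i\setminus\Phi_i(W')$; write $r:=r_x$, so $r=r_s$ for the $s\in S$ with $x=\alpha_s$ (or $x=\beta_s$), and $r\notin W'$. First I would observe that conjugation by $r$ carries $\Phi_i(W')$ to $\Phi_i(rW'r)$ (immediate from the definition $\Phi_i(U)=\{z\mid r_z\in U\}$ and $r_{rz}=rr_zr$), and likewise carries $\overline{N}$-data around by the general identity $\overline{N}(rtr)=r\,\overline{N}(t)\,r\,\dotplus$ (correction terms involving $r$ itself); more cleanly, I would work directly with \eqref{eq:alt}. The key point is that since $x\in\Pi_i$ is simple, Lemma~\ref{lem:simp}~(i) gives $N_i(r)=\{\widehat x\}$, and for any $z\in\Phi_i^+$ with $z\neq x$ (up to scalars), multiplication by $r$ does not change positivity; this is the standard ``simple reflection permutes the other positive roots'' phenomenon, here available because $r=r_s$ and $\widehat x\notin\widehat{\Phi_i(W')}$ means $rz\in\Phi_i(W')$ iff $z\in\Phi_i(W')$ for the relevant $z$. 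Then for $x'\in\Delta_i(W')$ one checks, using Lemma~\ref{lem:simp}~(iii) applied to $r_{rx'}=r\,r_{x'}\,r$, that $N_i(r_{rx'})\cap\widehat{\Phi_i(rW'r)}=\{\widehat{rx'}\}$, i.e. $rx'\in\Delta_i(rW'r)$; this gives $r\Delta_i(W')\subseteq\Delta_i(rW'r)$, and applying the same with the roles of $W'$ and $rW'r$ swapped (noting $r\notin rW'r$ as well, since $r\notin W'$) yields the reverse inclusion.

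For part~(ii), the inclusion $W'\Delta_i(W')\subseteq\Phi_i(W')$ is immediate from Lemma~\ref{lem:stab} together with $\Delta_i(W')\subseteq\Phi_i(W')$ (every element of $S(W')$ lies in $W'$). For the reverse inclusion I would argue that every $z\in\Phi_i(W')$ can be moved into $\Delta_i(W')$ by an element of $W'$. Given $z\in\Phi_i(W')$, consider $r_z\in W'\cap T$; by Lemma~\ref{lem:dyer}~(i) there are $t_0,\dots,t_m\in S(W')$ with $r_z=t_m\cdots t_1t_0t_1\cdots t_m$, so $r_z=w\,r_{x_0}\,w^{-1}$ with $w=t_m\cdots t_1\in W'$ and $x_0\in\Delta_i(W')$ the root with $r_{x_0}=t_0$. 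Hence $\widehat z=\widehat{wx_0}=w\widehat{x_0}$, so $z$ is a scalar multiple of $wx_0$; since $z\in\Phi_i$ and $wx_0\in\Phi_i$ with $w\in W'$, one then pins down that $z=wx_0$ exactly (the root system is $W$-stable and $\Phi_i(W')$ is $W'$-stable, and both $z$ and $wx_0$ are genuine roots in the same ray), giving $z\in W'\Delta_i(W')$.

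The main obstacle I expect is the bookkeeping in part~(ii) around the distinction between roots and their equivalence classes $\widehat z$: Lemma~\ref{lem:dyer}~(i) and the $N_i$-machinery naturally produce information at the level of $\widehat\Phi_i$, whereas the statement $\Phi_i(W')=W'\Delta_i(W')$ is at the level of actual roots. One must rule out that $wx_0$ is a proper scalar multiple of $z$; this should follow from the fact that $W'$ acts on $\Phi_i(W')$ (Lemma~\ref{lem:stab}) and that within $\Phi_i(W')$ the element $z$ is itself a root, so $w^{-1}z\in\Phi_i(W')$ is a root lying in the ray $\mathbb{R}_{>0}x_0$, and then one invokes whatever rigidity the Coxeter-datum structure on $\Delta_i(W')$ (via Proposition~\ref{pp:can1}) provides to conclude $w^{-1}z=x_0$. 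A cleaner route, which I would pursue if the scalar issue proves delicate, is to induct on $\ell_{W'}$ (the length in the Coxeter system $(W',S(W'))$) of a minimal $w\in W'$ with $w^{-1}z\in\Phi_i^+$, peeling off a generator $t=r_{x}\in S(W')$ with $\ell_{W'}(tw)<\ell_{W'}(w)$ and using Proposition~\ref{pp:key} inside $W'$ to show $t w^{-1}z$ stays positive while strictly decreasing length, until $w^{-1}z$ lands in $\Delta_i(W')$; the base case is exactly $w^{-1}z\in\Delta_i(W')$ by definition.
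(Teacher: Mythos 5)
Your plan for part (i) is essentially the paper's proof: work with the characterization (\ref{eq:alt}), expand $N_i(r_xr_yr_x)$ via Lemma~\ref{lem:simp}~(iii), use $N_i(r_x)=\{\widehat{x}\}$ from Lemma~\ref{lem:simp}~(i) together with $\widehat{x},\,r_y\widehat{x}\notin\widehat{\Phi_i(W')}$ to cancel the extra terms, and then swap the roles of $W'$ and $r_xW'r_x$ for the reverse inclusion. Part (ii) also follows the paper's route: Lemma~\ref{lem:stab} for one inclusion and Lemma~\ref{lem:dyer}~(i) for the other.

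The one place your primary plan goes astray is the resolution of the scalar ambiguity in part (ii). You propose to ``pin down that $z=wx_0$ exactly'' by appealing to some rigidity of the Coxeter datum on $\Delta_i(W')$; but in this framework that equality can genuinely fail --- the paper stresses (in the remark citing \cite[Lemma 3.20]{FU2}) that a single ray may contain several distinct roots, so $w^{-1}z$ may well be a proper scalar multiple of $x_0$. No rigidity is needed, however, and none is available: the paper simply observes that $w^{-1}z$ is itself an element of $\Phi_i$ lying on the line $\R x_0$, and membership in $\Delta_i(W')=\{x\in\Phi_i^+\mid r_x\in S(W')\}$ depends only on the class $\widehat{x}$ together with positivity; hence $w^{-1}z$ (after absorbing a possible sign into an extra factor of $r_{x_0}\in W'$) lies in $\Delta_i(W')$, and $z=w(w^{-1}z)\in W'\Delta_i(W')$. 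Your fallback induction on $\ell_{W'}$ via a $W'$-relative version of Proposition~\ref{pp:key} would also work, but it requires the relative length machinery of Lemma~\ref{lem:length}, which the paper only develops after this lemma; the direct observation above is both shorter and what the paper actually does.
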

\begin{proof}
(i):\quad It is readily checked that $r\Phi_i(W')=\Phi_i(rW'r)$ 
for all $r\in T$. Since $x\in \Pi_i\setminus \Phi_i(W')$, it follows 
that $r_x \in R\setminus W'$. Let $y\in \Delta_i(W')$ be arbitrary. Then 
\begin{align*}
 \qquad N_i(r_{(r_xy)})\cap \widehat{\Phi_i(r_x W' r_x)}
&=N_i(r_x r_y r_x)\cap \widehat{\Phi_i(r_x W' r_x)}\\
&\qquad\qquad\qquad \text{(by (\ref{eq:ref3}) and (\ref{eq:ref4}))}\\
&=(r_x N_i(r_x r_y)\dotplus N_i(r_x))\cap \widehat{\Phi_i(r_x W' r_x)} \\
&\qquad\qquad\qquad\text{(by Lemma \ref{lem:simp} (iii))}\\
&=(r_x r_y N_i(r_x)\dotplus r_x N_i(r_y)\dotplus N_i(r_x))\\
&\qquad\qquad\cap \widehat{\Phi_i(r_x W' r_x)}\\
&\qquad\qquad\qquad\text{(again by Lemma \ref{lem:simp} (iii))}\\
&=r_x ((r_y N_i(r_x)\dotplus N_i(r_y)\dotplus N_i(r_x))\\
&\qquad\qquad\cap \widehat{\Phi_i( W' )})\\
&=r_x((r_y \{\widehat{x}\}\dotplus N_i(r_y)\dotplus \{\widehat{x}\})\cap \widehat{\Phi_i( W' )})\\
&\qquad\qquad\qquad\text{(by Lemma \ref{lem:simp} (i))}\\
&=r_x(N_i(r_y)\cap \widehat{\Phi_i( W' )})\\
&\qquad\qquad\qquad\text{(since $\widehat(x), r_y\widehat{x}\notin \widehat{\Phi_i(W')}$)}\\
&=\{\widehat{r_xy}\}\\
&\qquad\qquad\qquad\text{(since $y\in \Delta_i(W')$).}
\end{align*}
Hence $r_x y\in \Delta_i(r_x W' r_x)$. This proves that 
$r_x \Delta_i(W')\subseteq \Delta_i(r_x W' r_x)$. 
But $x\in \Pi_i\setminus r_x\Phi_i(W')$, so the above yields that
 $ r_x \Delta_i(r_xW' r_x)\subseteq \Delta_i(W')$ proving the desired result.

(ii):\quad Since $\Delta_i(W') \subseteq \Phi_i(W')$ for each
 $i\in \{1,2\}$, it follows from Lemma~\ref{lem:stab} that 
$W'\Delta_i(W')\subseteq \Phi_i(W')$.

Conversely if $x\in \Phi_i(W')$ then $r_x \in W'\cap T$. By Lemma~\ref{lem:dyer}~(i), 
there are $x_0, x_1, \cdots, x_m \in \Delta_i(W')$ ($m\in \N$) such that 
$$r_x = r_{x_m}\cdots  r_{x_1} r_{x_0} r_{x_1}\cdots r_{x_m}.$$
Calculations similar to those of (\ref{eq:ref3}) and (\ref{eq:ref4}) 
enable us to conclude that 
$\lambda x =r_{x_m}\cdots r_{x_1}x_0 \in W' \Phi_i(W')$
for some (nonzero) scalar 
$\lambda$. Now since $\tfrac{1}{\lambda} x_0 =(r_{x_m}\cdots r_{x_1})^{-1} x\in \Phi_i$, 
it follows that $\tfrac{1}{\lambda}x_0 \in \Delta_i(W')$ and hence 
$x=r_{x_m}\cdots r_{x_1}(\tfrac{1}{\lambda}x_0)\in W'\Delta_i(W')$ as required.
\end{proof}
 
\begin{definition}
Let $W'$ be a reflection subgroup of $W$, and let $\ell_{W'}:W'\to \N$ 
be the length function on $(W', S(W'))$ defined by 
$$\ell_{W'}(w)=\min\{\,n\in \N \mid \text{$w=r_1\cdots r_n$, where $ r_i \in S(W')$}\,\}.$$
If $w=r_1\cdots r_n \in W'$ ($r_i \in S(W')$) and $n=\ell_{W'}(w)$ then $r_1\cdots r_n$ is 
called a \emph{reduced expression} for $w$ (with respect to $S(W')$).
\end{definition}

\begin{lemma}
\label{lem:N_i'}
Let $W'$ be a reflection subgroup. For each $i\in \{1,2\}$, 
\begin{itemize}
\item [(i)] $N_i(r_x)\cap \widehat{\Phi_i(W')}=\{\widehat{x}\}$ for all $x\in \Delta_i(W')$;
\item [(ii)] for all $w_1\in W$ and $w_2\in W'$
$$N_i(w_1w_2) \cap \widehat{\Phi_i(W')}= w_2^{-1}(N_i(w_1)\cap \widehat{\Phi_i(W')})\dotplus (N_i(w_2)\cap \widehat{\Phi_i(W')}).$$
\end{itemize}
\end{lemma}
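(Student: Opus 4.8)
The plan is to prove the two parts of Lemma~\ref{lem:N_i'} in sequence, treating (i) essentially as a special case of the analysis required for (ii), and working only in $\Phi_1$ (the $\Phi_2$ case being identical, as usual). For part (i): by definition, if $x\in\Delta_i(W')$ then $r_x\in S(W')$, and $S(W')\subseteq W'\cap T$, so $\widehat{x}\in N_i(r_x)$ certainly lies in $\widehat{\Phi_i(W')}$; the content is that no \emph{other} element of $N_i(r_x)$ lies in $\widehat{\Phi_i(W')}$. Here I would invoke the alternative description (\ref{eq:alt}) of $\Delta_i(W')$, which says precisely $N_i(r_x)\cap\widehat{\Phi_i(W')}=\{\widehat{x}\}$; so part (i) is essentially a restatement of (\ref{eq:alt}), and I would simply note this (or, if one prefers an independent route, combine the definition $S(W')=\{t\in T\mid \overline{N}(t)\cap W'=\{t\}\}$ with Proposition~\ref{pp:equal}, which translates $\overline{N}(r_x)$ into $\{r_z\mid\widehat{z}\in N_i(r_x)\}$, and observe that $r_z\in W'$ is equivalent to $\widehat{z}\in\widehat{\Phi_i(W')}$).

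For part (ii), the starting point is Lemma~\ref{lem:simp}~(iii), which gives the unconditional identity $N_i(w_1w_2)=w_2^{-1}N_i(w_1)\dotplus N_i(w_2)$ in $\widehat{\Phi_i}$. Intersecting both sides with $\widehat{\Phi_i(W')}$, the left side becomes exactly the left side of the claim, so what must be shown is that intersection with $\widehat{\Phi_i(W')}$ commutes with the symmetric difference here, i.e.
$$(w_2^{-1}N_i(w_1)\dotplus N_i(w_2))\cap\widehat{\Phi_i(W')}=(w_2^{-1}N_i(w_1)\cap\widehat{\Phi_i(W')})\dotplus(N_i(w_2)\cap\widehat{\Phi_i(W')}).$$
For arbitrary sets, $(A\dotplus B)\cap C = (A\cap C)\dotplus(B\cap C)$ holds with no hypotheses at all — this is just the distributivity of intersection over symmetric difference (both sides consist of the elements of $C$ lying in exactly one of $A$, $B$). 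So the only remaining point is to identify $w_2^{-1}N_i(w_1)\cap\widehat{\Phi_i(W')}$ with $w_2^{-1}(N_i(w_1)\cap\widehat{\Phi_i(W')})$, which is where the hypothesis $w_2\in W'$ enters: since $W'$ stabilizes $\Phi_i(W')$ by Lemma~\ref{lem:stab}, it stabilizes $\widehat{\Phi_i(W')}$, and hence $w_2^{-1}\widehat{\Phi_i(W')}=\widehat{\Phi_i(W')}$; therefore $\widehat{z}\in w_2^{-1}N_i(w_1)\cap\widehat{\Phi_i(W')}$ iff $w_2\widehat{z}\in N_i(w_1)$ and $w_2\widehat{z}\in\widehat{\Phi_i(W')}$ iff $w_2\widehat{z}\in N_i(w_1)\cap\widehat{\Phi_i(W')}$ iff $\widehat{z}\in w_2^{-1}(N_i(w_1)\cap\widehat{\Phi_i(W')})$.

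Assembling these pieces: apply Lemma~\ref{lem:simp}~(iii), intersect with $\widehat{\Phi_i(W')}$, use set-theoretic distributivity, and then rewrite the first term using the $W'$-stability of $\widehat{\Phi_i(W')}$ from Lemma~\ref{lem:stab}. I do not anticipate a genuine obstacle; the one place requiring a moment's care is making sure the $W$-action descends cleanly to $\widehat{\Phi_i}$ and that Lemma~\ref{lem:stab}'s statement $W'\Phi_i(W')=\Phi_i(W')$ really does yield $w_2^{-1}\widehat{\Phi_i(W')}=\widehat{\Phi_i(W')}$ for $w_2\in W'$ — but this is immediate since the action on $\widehat{\Phi_i}$ is $w\widehat{z}=\widehat{wz}$ and $W'$ is a group, so $w_2^{-1}\in W'$ as well. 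I would also remark explicitly at the end that taking $w_1=w_2^{-1}$ (or unwinding (ii) with $\ell_{W'}$-reduced expressions) gives the reflection-subgroup analogue of the $N_i$-cocycle identity relative to $(W',S(W'))$, which is the form in which the lemma will be used subsequently.
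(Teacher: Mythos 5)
Your proof is correct and follows essentially the same route as the paper: part (i) is read off from the characterization (\ref{eq:alt}) of $\Delta_i(W')$, and part (ii) combines Lemma~\ref{lem:simp}~(iii) with the distributivity of intersection over symmetric difference and the $W'$-stability of $\widehat{\Phi_i(W')}$ from Lemma~\ref{lem:stab}. No gaps.
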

\begin{proof}
(i)\quad is just the definition of $\Delta_i(W')$.

(ii)\quad Lemma \ref{lem:simp} (iii) yields that $N_i(w_1w_2) = w_2^{-1}N_i(w_1) \dotplus  N_i(w_2)$, and hence 
$$N_i(w_1w_2)\cap \widehat{\Phi_i(W')}=(w_2^{-1}N_i(w_1)\cap \widehat{\Phi_i(W')})\dotplus (N_i(w_2)\cap \widehat{\Phi_i(W')}).$$
Since $w_2\in W'$ it follows from Lemma \ref{lem:stab} that $w_2^{-1}\widehat{\Phi_i(W')}=\widehat{\Phi_i(W')}$.
Thus $w_2^{-1}N_i(w_1)\cap \widehat{\Phi_i(W')}= w_2^{-1}(N_i(w_1)\cap \widehat{\Phi_i(W')})$, giving us
$$
N_i(w_1w_2) \cap \widehat{\Phi_i(W')}= w_2^{-1}(N_i(w_1)\cap \widehat{\Phi_i(W')})\dotplus (N_i(w_2)\cap \widehat{\Phi_i(W')}).
$$
\end{proof}

\begin{lemma}
\label{lem:length}
Let $W'$ be a reflection subgroup. For each $i\in \{1,2\}$ and all $w\in W'$, we have

\noindent\rlap{\rm (i)}\qquad $|N_i(w)\cap \widehat{\Phi_i(W')}|= \ell_{W'}(w)$. 
Furthermore, if $w=r_{x_1}\cdots r_{x_n}$ (where $x_1, \cdots, x_n\in \Delta_i(W')$)
is reduced with respect to $(W', S(W'))$ then 
$$N_i(w)\cap \widehat{\Phi_i(W')} =\{\widehat{y_1}, \cdots \widehat{y_n}\}$$ 
where $y_j= (r_{x_n}\cdots r_{x_{j+1}})x_j$ for all $j=1,\cdots, n$.

\noindent\rlap{\rm (ii)}\qquad 
$N_i(w)\cap \widehat{\Phi_i(W')} =\{\widehat{x}\in \widehat{\Phi_i(W')} \mid \ell_{W'}(wr_x)<\ell_{W'}(w)\}$.
\end{lemma}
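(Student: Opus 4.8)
The plan is to run, relative to the Coxeter system $(W',S(W'))$, the two inductions that establish Lemma~\ref{lem:simp} and Proposition~\ref{pp:key} for $(W,R)$, with Lemma~\ref{lem:N_i'} now playing the role that Lemma~\ref{lem:simp} played there. One should avoid the tempting shortcut of applying those results verbatim to a Coxeter datum for $W'$ manufactured from $\Delta_i(W')$: the existence of such a datum would require $\Delta_i(W')$ to satisfy (A2) and (A3), which is the converse of Proposition~\ref{pp:can1} that the present lemma is itself a step towards, so that argument would be circular. Throughout I use that $(W',S(W'))$ is a Coxeter system whose reflections are the elements of $W'\cap T$ (by Lemma~\ref{lem:dyer}), so each $r_x$ with $x\in\Phi_i(W')$ has odd $\ell_{W'}$-length and hence $\ell_{W'}(wr_x)\neq\ell_{W'}(w)$ for $w\in W'$; that $\Phi_i(W')$ is $W'$-stable (Lemma~\ref{lem:stab}) and is a union of $\sim_i$-classes; that $r_z$ depends only on $\widehat{z}$; and that $\Phi_i=\Phi_i^+\uplus\Phi_i^-$.

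Step 1, a relative form of Proposition~\ref{pp:key}: I first prove that, for $w\in W'$ and $x\in\Phi_i(W')\cap\Phi_i^+$, one has $wx\in\Phi_i^+$ when $\ell_{W'}(wr_x)>\ell_{W'}(w)$ and $wx\in\Phi_i^-$ when $\ell_{W'}(wr_x)<\ell_{W'}(w)$. As in the proof of Proposition~\ref{pp:key}, the second case follows from the first applied to $wr_x$ in place of $w$ (since $(wr_x)x=-wx$), and the first is proved by induction on $\ell_{W'}(w)$, the case $\ell_{W'}(w)=0$ being immediate. For the inductive step choose $s=r_y\in S(W')$ with $y\in\Delta_i(W')$ and $\ell_{W'}(sw)<\ell_{W'}(w)$; then $\ell_{W'}((sw)r_x)\geq\ell_{W'}(wr_x)-1>\ell_{W'}(sw)$, so the inductive hypothesis gives $(sw)x\in\Phi_i^+$. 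If $wx\in\Phi_i^-$, then exactly one of $wx$ and $s(wx)=(sw)x$ is positive, so $\widehat{wx}\in N_i(r_y)$; as $wx\in\Phi_i(W')$, Lemma~\ref{lem:N_i'}(i) forces $\widehat{wx}\in N_i(r_y)\cap\widehat{\Phi_i(W')}=\{\widehat{y}\}$, whence $r_{wx}=r_y=s$. But $r_{wx}=wr_xw^{-1}$ by the conjugation formulas (\ref{eq:ref3})--(\ref{eq:ref4}), so $wr_x=sw$ and $\ell_{W'}(wr_x)=\ell_{W'}(sw)<\ell_{W'}(w)$, contradicting the hypothesis. I expect this inductive step to be the main obstacle; what makes it work is that Lemma~\ref{lem:N_i'}(i) pins down $N_i(r_y)$ only \emph{after} intersection with $\widehat{\Phi_i(W')}$, which is legitimate here precisely because $wx$ lies in $\Phi_i(W')$.

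Step 2, part (ii): given $w\in W'$ and $\widehat{x}\in\widehat{\Phi_i(W')}$, take the $\mathscr{C}$-positive representative $x\in\Phi_i(W')\cap\Phi_i^+$; then $\widehat{x}\in N_i(w)$ iff $wx\in\Phi_i^-$, which by Step 1 is equivalent to $\ell_{W'}(wr_x)<\ell_{W'}(w)$. Since $r_x$ is independent of the choice of representative, this is exactly the asserted description of $N_i(w)\cap\widehat{\Phi_i(W')}$.

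Step 3, part (i): argue by induction on $n:=\ell_{W'}(w)$ with a reduced expression $w=r_{x_1}\cdots r_{x_n}$ of $(W',S(W'))$, the case $n=0$ being trivial. Write $w=w'r_{x_n}$ with $w'=r_{x_1}\cdots r_{x_{n-1}}$ reduced; then Lemma~\ref{lem:N_i'}(ii) together with the base case Lemma~\ref{lem:N_i'}(i) gives
$$N_i(w)\cap\widehat{\Phi_i(W')}=r_{x_n}\bigl(N_i(w')\cap\widehat{\Phi_i(W')}\bigr)\dotplus\{\widehat{x_n}\}.$$
By induction the first set on the right is $\{\widehat{y'_1},\dots,\widehat{y'_{n-1}}\}$ with $y'_j=(r_{x_{n-1}}\cdots r_{x_{j+1}})x_j$, pairwise distinct; applying $r_{x_n}$ turns these into the $\widehat{y_j}$ ($1\le j\le n-1$) of the statement, still pairwise distinct. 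Finally $\widehat{x_n}=\widehat{y_n}$ is not among them: this says $\widehat{x_n}\notin N_i(w')$, which by part (ii) would give $\ell_{W'}(w'r_{x_n})<\ell_{W'}(w')$, i.e.\ $n<n-1$. Hence the symmetric difference above is a disjoint union of size $n$, proving simultaneously $|N_i(w)\cap\widehat{\Phi_i(W')}|=\ell_{W'}(w)$ and the stated description $\{\widehat{y_1},\dots,\widehat{y_n}\}$. Beyond the cocycle bookkeeping, the only substantive input here is the non-cancellation fact supplied by part (ii) (equivalently by Step 1).
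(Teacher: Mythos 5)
Your proof is correct, but it reverses the paper's logical order and uses a different mechanism at the key step. The paper proves (i) first: the reflections $t_j=r_{y_j}$ attached to a reduced expression are shown to be pairwise distinct by the classical deletion argument (if $t_j=t_k$ with $j>k$, then $w=r_{x_1}\cdots r_{x_{k-1}}r_{x_{k+1}}\cdots r_{x_{j-1}}r_{x_{j+1}}\cdots r_{x_n}$, contradicting $\ell_{W'}(w)=n$), so the iterated application of Lemma~\ref{lem:N_i'}(ii) turns the symmetric difference of the singletons $\{\widehat{y_j}\}$ into a genuine union of $n$ distinct classes; part (ii) is then deduced from (i), the forward direction by exhibiting $wr_{y_j}$ as a word of length $n-1$ and the converse by one more application of the cocycle identity. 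You instead first establish a relative version of Proposition~\ref{pp:key} by induction on $\ell_{W'}$ --- correctly noting that Lemma~\ref{lem:N_i'}(i) pins down $N_i(r_y)$ only after intersecting with $\widehat{\Phi_i(W')}$, which suffices because $wx$ stays in $\Phi_i(W')$ by Lemma~\ref{lem:stab} --- then read off (ii), and finally use (ii) to supply the non-cancellation in the cocycle computation for (i). Both arguments rest on the same inputs (Lemma~\ref{lem:N_i'}, Lemma~\ref{lem:dyer}, and the parity of $\ell_{W'}$ on reflections); the paper's route is shorter and purely word-combinatorial, while yours yields as a by-product the geometric positivity criterion for $(W',S(W'))$ (a statement the paper never isolates, though it would immediately reprove Lemma~\ref{lem:d3.2}). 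Your observation that one cannot simply apply Lemma~\ref{lem:simp} to a Coxeter datum manufactured from $\Delta_i(W')$ without circularity is well taken; the deletion argument is exactly how the paper sidesteps the same issue.
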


\begin{proof}
(i): For each $j\in \{1, \cdots, n\}$, set $t_j = r_{x_n}\cdots r_{x_{j+1}}r_{x_j}r_{x_{j+1}}\cdots r_{x_n}$,
 that is, $t_j=r_{y_j}$. If $t_j=t_k$ where $j>k$ then 
\begin{align*}
w&= r_{x_1}\cdots r_{x_{k-1}}r_{x_{k+1}}\cdots r_{x_n}t_k \\
&=r_{x_1}\cdots r_{x_{k-1}}r_{x_{k+1}}\cdots r_{x_n}t_j\\
&=r_{x_1}\cdots r_{x_{k-1}}r_{x_{k+1}}\cdots r_{x_{j-1}}r_{x_{j+1}}\cdots r_{x_n}
\end{align*}
contradicting $\ell_{W'}(w)=n$. Hence the $t_j$'s are all distinct and consequently all the $\widehat{y_j}$'s 
are all distinct. Now by repeated application of Lemma~\ref{lem:N_i'}~(ii), for each $i\in \{1,2\}$ we have
\begin{align*}
&\qquad N_i(w)\cap \widehat{\Phi_i(W')}\\
&= (N_i(r_{x_n}\cap \widehat{\Phi_i(W')})\dotplus r_{x_n}(N_i(r_{n-1})\cap\widehat{\Phi_i(W')})\dotplus \cdots \\
&\qquad \qquad \qquad \qquad \qquad\qquad\qquad\qquad\dotplus r_{x_n}\cdots r_{x_2}(N_i(r_{x_1})\cap \widehat{\Phi_i(W')})\\
&= \{\widehat{y_n}\} \dotplus  \{\widehat{y_{n-1}}\} \dotplus \cdots \dotplus \{\widehat{y_1}\} \\
&=\{\,\widehat{y_1},\cdots, \widehat{y_n} \,\}
\end{align*}
and consequently $|N_i(w) \cap \widehat{\Phi_i(W')}| =\ell_{W'}(w)$.

(ii): Let $w=r_{x_1}\cdots r_{x_n}$ be a reduced expression for $w\in W'$ with respect to 
$S(W')$ ($x_1, \cdots, x_n \in \Delta_i(W')$). Then for each $i\in \{1,2\}$, Part~(i) above yields that 
$$N_i(w)\cap \widehat{\Phi_i(W')}= \{\,\widehat{y_1},\cdots, \widehat{y_n}\,\}$$
where  $y_j= (r_{x_n}\cdots r_{x_{j+1}})x_j$, for all $j\in \{1, \cdots, n\}$. Now for each such~$j$,
$$
wr_{y_j}= w r_{x_n}\cdots r_{x_{j+1}}r_{x_j} r_{x_{j+1}} \cdots r_{x_n}=r_{x_1}\cdots r_{x_{j-1}}r_{x_{j+1}}\cdots r_{x_n}
$$
and so $\ell_{W'}(wr_{y_j})\leq n-1 <\ell_{W'}(w)$. Hence if $\widehat{x} \in N_i(w)\cap \widehat{\Phi_i(W')}$, 
then $\ell_{W'}(w r_x)< \ell_{W'}(w)$.

Conversely, suppose that $x\in \Phi_i(W') \cap \Phi^+_i$ and $\widehat{x}\notin N_i(w)$. 
We are done if we could show that then $\ell_{W'}(w r_x)>\ell_{W'}(w)$. Observe that the given choice of $x$ implies 
that $\widehat{x}\in N_i(r_x) \cap \widehat{\Phi_i(W')}$, furthermore, 
$\widehat{x}\notin r_x (N_i(w)\cap \widehat{\Phi_i(W')})$. Therefore 
$$\widehat{x} \in r_x(N_i(w)\cap \widehat{\Phi_i(W')})\dotplus  (N_i(r_x)\cap \widehat{\Phi_i(W')}) = N_i(wr_x)\cap \widehat{\Phi_i(W')},$$
and by what has just been proved, this implies that
$$\ell_{W'}(w)= \ell_{W'}((w r_x)r_x)<\ell_{W'}(w r_x),$$
as desired.

\end{proof}

The following is a mild generalization of \cite[Lemma 3.2]{MD87}:
\begin{lemma}
\label{lem:d3.2}
Let $W'$ be a reflection subgroup. For each $i\in \{1,2\}$, let $x,y \in \Delta_i(W')$ 
such that $r_x \neq r_y$. Let $n =\ord(r_x r_y)$. Then for $0\leq m<n$
$$
\underbrace{\cdots r_y r_x r_y}_\text{$m$ factors} x \in \Phi_i^+ \qquad \text{and}\qquad
\underbrace{\cdots r_x r_y r_x}_\text{$m$ factors} y \in \Phi_i^+.$$
\end{lemma}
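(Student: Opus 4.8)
The plan is to reduce both displayed assertions---which are interchanged by swapping $x$ and $y$---to a statement about the length function $\ell_{W'}$ of the Coxeter system $(W',S(W'))$, and then to read the conclusion off Lemma~\ref{lem:length}. Fix $i$ and put $t:=r_x$ and $t':=r_y$. By the definition of $\Delta_i(W')$ we have $t,t'\in S(W')$, so by Lemma~\ref{lem:dyer}~(ii) they are distinct (as $r_x\neq r_y$) Coxeter generators of $(W',S(W'))$, with $\ord(tt')=n$. For $m\geq 0$ let $w_m\in W'$ denote the alternating product $\cdots r_y r_x r_y$ of $m$ factors with rightmost factor $r_y$ (so $w_0=1$); this is exactly the element appearing in the first assertion, and the second assertion is the same statement with the roles of $x$ and $y$ reversed. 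Hence it is enough to prove $w_m x\in\Phi_i^+$ for $0\leq m<n$.

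The first step is the length-theoretic observation that $\ell_{W'}(w_m r_x)>\ell_{W'}(w_m)$ for $0\leq m<n$. Since $\{t,t'\}\subseteq S(W')$, it generates a (dihedral) standard parabolic subgroup of $(W',S(W'))$, on which $\ell_{W'}$ restricts to the internal length function of that dihedral group---a standard fact about Coxeter systems (\cite{BN68} or \cite{HM}). Inside that dihedral group the alternating word of $k$ factors is reduced for every $k\leq n$ (for every $k$ when $n=\infty$); and appending $r_x$ to $w_m$ on the right causes no cancellation, since $w_m$ ends on the right in $r_y$ (or equals $1$), so $w_m r_x$ is the alternating word of $m+1$ factors. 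Therefore $\ell_{W'}(w_m)=m$ and $\ell_{W'}(w_m r_x)=m+1$ whenever $m<n$. This bookkeeping of reduced alternating words is the one point that genuinely needs checking; everything else is formal.

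For the second step, $x\in\Delta_i(W')\subseteq\Phi_i^+$ and $r_x\in S(W')\subseteq W'\cap T$, so $x\in\Phi_i(W')$, i.e.\ $\widehat{x}\in\widehat{\Phi_i(W')}$; since also $w_m\in W'$, Lemma~\ref{lem:length}~(ii) gives $\widehat{x}\in N_i(w_m)$ if and only if $\ell_{W'}(w_m r_x)<\ell_{W'}(w_m)$, which the first step shows is false. Thus $\widehat{x}\notin N_i(w_m)$, and since $x\in\Phi_i^+$ this forces $w_m x\notin\Phi_i^-$ by Definition~\ref{def:N}. As $\Phi_i$ is $W$-stable, so $w_m x\in\Phi_i$, and $\Phi_i=\Phi_i^+\uplus\Phi_i^-$ (one of the equivalent conditions of Theorem~\ref{th:datum}, valid because $\mathscr{C}$ is a Coxeter datum), we conclude $w_m x\in\Phi_i^+$; the case $m=0$ is immediate from $x\in\Phi_i^+$. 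Running the same argument with $x$ and $y$ interchanged gives the second assertion. I expect the main obstacle to be no more than the dihedral length computation of the first step, the rest being a direct pass through Lemma~\ref{lem:length}~(ii) and the decomposition $\Phi_i=\Phi_i^+\uplus\Phi_i^-$.
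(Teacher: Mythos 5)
Your proof is correct and follows essentially the same route as the paper: the paper's own proof consists precisely of the observation that $\ell_{W'}$ applied to the alternating words increases upon appending $r_x$ (resp.\ $r_y$) for $m<n$, followed by an appeal to Lemma~\ref{lem:length}. You have merely filled in the two steps the paper leaves implicit (the dihedral length bookkeeping and the translation from $\widehat{x}\notin N_i(w_m)$ to $w_m x\in\Phi_i^+$), both correctly.
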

\begin{proof}
It is easily checked that when $0\leq m<n$ we have
$$\ell_{W'}((\underbrace{\cdots r_y r_x r_y}_\text{$m$ factors})r_x) = m+1 >m=\ell_{W'}(\underbrace{\cdots r_y r_x r_y}_\text{$m$ factors}),$$ 
as well as
$$\ell_{W'}((\underbrace{\cdots r_x r_y r_x}_\text{$m$ factors})r_y) = m+1 >m=\ell_{W'}(\underbrace{\cdots r_x r_y r_x}_\text{$m$ factors}).$$
Hence the desired result follows immediately from Lemma~\ref{lem:length}.
\end{proof}

In fact we can refine Lemma \ref{lem:d3.2} with the following generalization of \cite[Lemma 3.3]{MD87}:
\begin{lemma} 
\label{lem:d3.3}
Let $W'$ be a reflection subgroup. For each $i\in \{1,2\}$, 
let $x, y \in \Delta_i(W')$ with $r_x \neq r_y$. Let $n=\ord(r_x r_y)$, and let 
$c_m, d_m, c'_m$ and $d'_m$ be constants such that
$$\underbrace{(\cdots r_y r_x r_y)}_\text{$m$ factors} x = c_m x + d_m y \quad \text{and} \quad \underbrace{(\cdots r_x r_y r_x)}_\text{$m$ factors} y = c_m' x + d_m' y.$$
Then $c_m \geq 0$, $d_m \geq 0$, $c_m' \geq 0$ and $d_m'\geq 0$ whenever $m<n$.
\end{lemma}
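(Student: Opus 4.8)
The plan is to reduce this to an explicit two-generator computation inside the invariant plane $\mathbb{R}x + \mathbb{R}y \subseteq \spa(\Delta_i(W'))$, following the template of the matrix identities \eqref{eq:MA1}--\eqref{eq:MA2} already established for $A$ and $B$. First I would observe that by Lemma~\ref{lem:d3.2} every vector of the form $(\cdots r_y r_x r_y)x$ and $(\cdots r_x r_y r_x)y$ with $m < n$ factors lies in $\Phi_i^+$, hence in $\PLC(\Pi_i)$; the content of the present lemma is the sharper assertion that the coefficients are nonnegative \emph{in the basis $\{x,y\}$ of the plane} (not merely in the simple roots $\Pi_i$). To get at the coefficients $c_m, d_m, c'_m, d'_m$, I would set up the $2\times 2$ matrices for $\rho_i(r_x)$ and $\rho_i(r_y)$ acting on $\mathbb{R}x+\mathbb{R}y$ exactly as in the proof of Theorem~\ref{th:datum}: with $\mathscr{A}=\mathbb{R}$, $q=1$, $\gamma = \sqrt{\langle x,\phi(y)\rangle\langle y,\phi(x)\rangle}$ and $X = -\langle y,\phi(x)\rangle / \gamma$, so that $r_x \leftrightarrow A$ and $r_y \leftrightarrow B$ (using $\langle x,\phi(x)\rangle = 1$ from \eqref{eq:D1} and the sign condition (A2)/(i')). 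Note $\gamma \in \{\cos(\pi/m)\} \cup [1,\infty)$ by (A3)/(ii'), so $\gamma \geq 0$ and, crucially, Proposition~\ref{pp:pn}(i) gives $p_k p_{k+1} \geq 0$ for all $k \in \mathbb{N}$.

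Next I would read off the coefficients directly from \eqref{eq:MA1a} and \eqref{eq:MA2a}. Applying the word $(\cdots r_y r_x r_y)$ with $m$ factors to $x$ amounts to looking at the first column of the relevant power of $A$ and $B$; for instance with $m = 2k+1$ factors the word is $A(BA)^k$, and \eqref{eq:MA1a} shows its first column is $(-p_{2k+1}, -p_{2k}X^{-1})^{T}$ (recall $q=1$), so that $(\cdots r_y r_x r_y)x = -p_{2k+1}x - p_{2k}X^{-1}y$; substituting $X^{-1} = -\gamma/\langle y,\phi(x)\rangle$ with $\langle y,\phi(x)\rangle \leq 0$ one finds the $y$-coefficient equals $p_{2k}\gamma/(-\langle y,\phi(x)\rangle) \cdot$ (sign bookkeeping), and the $x$-coefficient is $-p_{2k+1}$. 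The remaining parity $m = 2k$ uses $(BA)^k$ via \eqref{eq:MA2a}, and the two statements for $(\cdots r_x r_y r_x)y$ are obtained symmetrically by swapping the roles of $A$ and $B$ (equivalently $x$ and $y$), using \eqref{eq:MA1} and \eqref{eq:MA2}. In each of the four cases the resulting coefficient is, up to the manifestly nonnegative factor $\gamma/(-\langle y,\phi(x)\rangle)$ or $1$, one of $\pm p_k$ or $\pm p_{k\pm 1}$, and $m < n = \ord(r_x r_y)$ guarantees by Proposition~\ref{pp:pn}(iii) that the relevant $p_k$ do not vanish prematurely and, combined with $p_k p_{k+1} \geq 0$ and the initial values $p_{-1} = -1$, $p_0 = 0$, $p_1 = 1$, that each has the correct sign to make $c_m, d_m, c'_m, d'_m \geq 0$.

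The main obstacle I anticipate is purely the sign bookkeeping: keeping straight which parity of $m$ corresponds to which of the four matrix identities, carefully tracking the factors of $X$ and $X^{-1}$ through the substitution $X = -\langle y,\phi(x)\rangle/\gamma$, and verifying that the signs of the $p_k$ — which alternate in a way governed by \eqref{eq:MA4} and the size of $\gamma$ relative to $1$ — interact correctly with the sign conventions $p_{-1} = -1$, $p_0 = 0$. A clean way to handle this uniformly is to use the sign pattern guaranteed by Proposition~\ref{pp:pn}(i): since $p_k p_{k+1} \geq 0$ for all $k$ and $p_1 = 1 > 0$, an easy induction shows that for $k$ in the range where $p_k \neq 0$ (which, by Proposition~\ref{pp:pn}(iii), includes all $k$ with $|k| < n$ when $\gamma = \cos(\pi/n)$, and all $k > 0$ when $\gamma \geq 1$) the sequence $p_0, p_1, p_2, \dots$ has entries that are all $\geq 0$. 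Feeding this into the four explicit formulas then yields the four inequalities simultaneously, and the case $\gamma \geq 1$ (i.e.\ $n = \infty$) is covered by the same argument since then $p_k > 0$ for all $k \geq 1$. Finally I would note that the edge cases $m = 0$ (the coefficients are $(1,0)$ or $(0,1)$) and $m = 1$ (directly $r_y x = x - 2\langle x,\phi(y)\rangle y$ with $\langle x,\phi(y)\rangle \leq 0$ by (A2)) are immediate, which also serves as a consistency check on the orientation of the matrices.
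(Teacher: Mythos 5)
There is a genuine circularity in your argument. To set up the $2\times 2$ matrices $A$ and $B$ with $q=1$, $\gamma=\sqrt{\langle x,\phi(y)\rangle\langle y,\phi(x)\rangle}$ and $X=-\langle y,\phi(x)\rangle/\gamma$, and then to invoke Proposition~\ref{pp:pn}~(i) to get $p_kp_{k+1}\geq 0$, you need to know that $\langle x,\phi(y)\rangle\leq 0$, $\langle y,\phi(x)\rangle\leq 0$ and $\langle x,\phi(y)\rangle\langle y,\phi(x)\rangle\in\{\cos^2(\pi/m)\}\cup[1,\infty)$ for the given $x,y\in\Delta_i(W')$. You cite ``(A2)/(i')'' and ``(A3)/(ii')'' for these facts, but in the paper those are \emph{hypotheses} imposed on a candidate set $\Delta'_1$ in Proposition~\ref{pp:can1}; for the canonical roots $\Delta_i(W')$ of an arbitrary reflection subgroup they are precisely the \emph{conclusion} of Proposition~\ref{pp:d3.4}, which is the final goal of the section and is itself deduced from Lemma~\ref{lem:d3.3} (via the decomposition $\Phi''_1=\Phi''^+_1\uplus\Phi''^-_1$ and Theorem~\ref{th:datum}). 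So your proof of Lemma~\ref{lem:d3.3} assumes what Lemma~\ref{lem:d3.3} is needed to prove. Without those sign and product conditions you cannot even form $\gamma$ (the product $\langle x,\phi(y)\rangle\langle y,\phi(x)\rangle$ is not yet known to be nonnegative), let alone conclude $p_kp_{k+1}\geq 0$. You correctly identify that Lemma~\ref{lem:d3.2} only gives positivity relative to $\Pi_i$ rather than relative to the basis $\{x,y\}$, but the step you take to bridge that gap is exactly the step that is unavailable at this point in the logical development.

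The paper's proof avoids this by an induction on $\ell(r_x)$ that never touches the pairing values. In the base case $\ell(r_x)=1$ one has $\lambda x\in\Pi_i$, one expands $y=\sum_{z\in\Pi_i}\lambda_z z$ with $\lambda_z\geq 0$ and $\lambda_{z_0}>0$ for some $z_0\neq x$, and then Lemma~\ref{lem:d3.2} plus condition (D2) of the ambient datum forces $d_m\geq 0$ (a negative $d_m$ would exhibit $x$ in $\PLC(\Pi_1\setminus\{x\})$ or $0$ in $\PLC(\Pi_1)$). The inductive step picks $z\in\Pi_i$ with $\ell(r_zr_xr_z)=\ell(r_x)-2$, passes to $W''=r_zW'r_z$ using Lemma~\ref{lem:dfacts}~(i), and transports the coefficients back by applying $r_z$. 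If you want to keep your matrix-based strategy, you would first have to establish (A2) and (A3) for $\Delta_i(W')$ by some independent means, which in this paper's architecture is not available before Lemma~\ref{lem:d3.3}.
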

\begin{proof}
By symmetry, it will suffice to prove that $d_m\geq 0$ and $d_m'\geq 0$. The proof of this will be based on an induction on $\ell(r_x)$.

Suppose first that $\ell(r_x)=1$. Then $\lambda x\in \Pi_i$ for some $\lambda>0$. 
Write $y=\sum\limits_{z\in \Pi_i}\lambda_z z$ where $\lambda_z\geq 0$ for all $z\in \Pi_i$. 
In fact, $\lambda_{z_0} >0$ for some $z_0\in \Pi_i\setminus\{x\}$, since otherwise we would have $y\in \R x$ and so $r_x=r_y$. 
Now for $0\leq m<n$, Lemma \ref{lem:d3.2} yields that 
$$
(\underbrace{\cdots r_y r_x r_y}_\text{$m$ factors})x =c_m x+ \sum\limits_{z\in \Pi_i} d_m\lambda_z z\in \Phi_i^+. 
$$
That is
$$c_m x+d_m (\sum_{z\in \Pi_i}\lambda_z z)=\sum_{z\in \Pi_i}\mu_{z}z, \text{ where $\mu_{z}\geq 0$, for all $z\in \Pi_i$}.$$
Now if $d_m\leq 0$ then the above yields that
$$(c_m-\mu_x) x=(\mu_{z_0}-d_m \lambda_{z_0})z_0+ \sum_{z\in \Pi_i\setminus\{x, z_0\}}(\mu_{z}-d_m\lambda_z )z, $$
contradicting condition (D2). Therefore, $d_m>0$ as required.
Similarly $d_m'\geq 0$.

Suppose inductively now that the result is true for reflection subgroups $W''$ of $W$ 
and $x',\,y'\in \Delta_i(W'')$ with $r_{x'}\neq r_{y'}$ and $\ell(r_{x'})<\ell(r_x)$ where $\ell(r_x)\geq 3$. 
It is well know that there exists $z\in \Pi_i$ such that $\ell(r_z r_x r_z)=\ell(r_x)-2$. 
Then $\ell(r_x r_z)<\ell(r_x)$, and thus $\widehat{z}\in N_i(r_x)$. But since $x\in \Delta_i(W')$ 
and $x\neq z$ (since $\ell(r_x)\geq 3$), it follows that $r_z \notin W'$. Let $W''=r_z W' r_z$. 
Lemma~\ref{lem:dfacts}~(i) yields that $\Delta_i(W'')=r_z \Delta_i(W')$ and therefore $r_z x,\,r_z y \in \Delta_i(W'')$. Now 
\begin{equation}
\label{eq:r_z}
r_{(r_z x)}= r_z r_x r_z \qquad\text{and}\qquad r_{(r_z y)}=r_z r_y r_z
\end{equation}
and hence $\ord(r_{(r_z x)} r_{(r_z y)})=\ord(r_x r_y)=n$. Since $\ell(r_{(r_z x)})=\ell(r_x)-2$, the inductive hypothesis gives
\begin{align*}
(\underbrace{\cdots r_{(r_z y)} r_{(r_z x)} r_{(r_z y)}}_\text{$m$ factors})(r_z x)&= c_m(r_z x)+d_m (r_z y)\\
\noalign{\hbox{and}}
(\underbrace{\cdots r_{(r_z x)} r_{(r_z y)} r_{(r_z x)}}_\text{$m$ factors})(r_z y) &=c_m'(r_z x)+d_m'(r_z y)
\end{align*}
where $d_m, \,d_m'\geq 0$ for $0\leq m<n$. Finally, by (\ref{eq:r_z}), the desired result follows 
on applying $r_z$ to both sides of the last two equations.
\end{proof}

\begin{proposition}
\label{pp:d3.4}
Let $W'$ be a reflection subgroup of $W$. Suppose that $x,y \in \Delta_1(W')$ with $r_x \neq r_y$. 
Let $n=\ord(r_x r_y) \in \{\infty\} \cup \N$. Then
\begin{align*}
&\langle x, \phi(y)\rangle \leq 0 \\
\noalign{\hbox{and}}
&\begin{cases}
\langle x, \phi(y)\rangle \langle y, \phi(x)\rangle = \cos^2\tfrac{\pi}{n}\qquad & (n\in \N, n \geq 2)\\
\langle x, \phi(y)\rangle \langle y, \phi(x)\rangle \in [ 1, \infty) \qquad&(n=\infty)
\end{cases}
\end{align*}
\end{proposition}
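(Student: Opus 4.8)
The plan is to mimic the argument in the converse half of Theorem~\ref{th:datum}, but now working entirely inside the two-dimensional subspaces spanned by $x,y$ and by $\phi(x),\phi(y)$, and using Lemma~\ref{lem:d3.2} and Lemma~\ref{lem:d3.3} in place of the ad hoc coefficient inspections available there. First I would set $n=\ord(r_xr_y)$ and record the basic computation $r_y x = x - 2\langle x,\phi(y)\rangle y$ (from the displayed formula for $r_\alpha$ just before the reflection-subgroup definitions, using $r_x=r_{x}$ acting on $V_1$ and $\phi(x)$ in place of $\phi(\alpha)$). By Lemma~\ref{lem:d3.3} with $m=1$ we have $r_y x = c_1 x + d_1 y$ with $c_1,d_1\ge 0$; comparing with $r_y x = x - 2\langle x,\phi(y)\rangle y$ forces $c_1=1$ and $d_1 = -2\langle x,\phi(y)\rangle \ge 0$, i.e.\ $\langle x,\phi(y)\rangle\le 0$. (Here one uses that $x$ and $y$ are linearly independent, which follows since $r_x\ne r_y$ means $\widehat x\ne\widehat y$.) That disposes of the first inequality, and by the symmetric statement $\langle y,\phi(x)\rangle\le 0$ as well.

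For the quantitative part I would invoke Proposition~\ref{pp:pn} via the explicit $2\times 2$ matrices $A,B$ introduced before it. Exactly as in the proof of Theorem~\ref{th:datum}, take $\mathscr{A}=\R$, $q=1$, $\gamma=\sqrt{\langle x,\phi(y)\rangle\langle y,\phi(x)\rangle}$, and $X = -\langle y,\phi(x)\rangle/\gamma$ (legitimate provided this product is nonzero; the case $\langle x,\phi(y)\rangle\langle y,\phi(x)\rangle=0$ is handled separately — then by Lemma~\ref{lem:equivariant}~(iii) both pairings vanish, $r_x$ and $r_y$ commute, $n=2$, and $\cos^2(\pi/2)=0$ matches). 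With these choices $A$ and $B$ represent $r_x$ and $r_y$ on the $\langle\{r_x,r_y\}\rangle$-invariant plane $\R x+\R y\subseteq V_1$, and the words $\underbrace{\cdots r_yr_xr_y}_{m}x$ and $\underbrace{\cdots r_xr_yr_x}_{m}y$ have their coefficients expressed through the $p_n$ of \eqref{eq:MA3}–\eqref{eq:MA4} by \eqref{eq:MA1}–\eqref{eq:MA2}. Lemma~\ref{lem:d3.3} (for all $m<n$) then says precisely that the relevant products $p_m p_{m+1}$ are $\ge 0$ for $0\le m<n$, and together with the order computation in Proposition~\ref{pp:pn}~(iii) (which pins down when $AB$, i.e.\ $r_xr_y$, has order exactly $n$) this yields $\gamma\in\{\cos(\pi/n)\}$ when $n$ is finite and $\gamma\ge 1$ when $n=\infty$; squaring gives the stated conclusion.

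The one genuine subtlety — and the step I expect to cost the most care — is making the passage from ``$\langle x,\phi(y)\rangle\langle y,\phi(x)\rangle$ and $\langle y,\phi(x)\rangle$ determine a matrix pair $(A,B)$ representing $(r_x,r_y)$ on $\R x+\R y$'' fully rigorous when $x,y$ are not simple roots. One must check that $\R x + \R y$ is genuinely $\langle\{r_x,r_y\}\rangle$-invariant (immediate from the reflection formulas, using $r_z x = x - 2\langle x,\phi(z)\rangle z$ for $z\in\{x,y\}$ and $\langle x,\phi(x)\rangle=1$ from \eqref{eq:D1} or Lemma~\ref{lem:equivariant}), that $x,y$ are a basis of it (linear independence as above), and that in this basis the matrices of $r_x,r_y$ are exactly $A,B$ — a short computation matching $r_x x=-x$, $r_x y = y - 2\langle y,\phi(x)\rangle x$, $r_y x = x - 2\langle x,\phi(y)\rangle y$, $r_y y = -y$ against the entries of $A$ and $B$ with the chosen $\gamma,X$. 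A minor bookkeeping point is the sign/normalisation in $X$: one wants the off-diagonal entries to reproduce $-2\langle y,\phi(x)\rangle$ and $-2\langle x,\phi(y)\rangle$, which is why $X=-\langle y,\phi(x)\rangle/\gamma$ and $X^{-1}=-\langle x,\phi(y)\rangle/\gamma$ (consistent because $\gamma^2=\langle x,\phi(y)\rangle\langle y,\phi(x)\rangle$). Once this dictionary is in place the argument is a direct transcription of the Theorem~\ref{th:datum} proof with ``simple root'' replaced by ``element of $\Delta_i(W')$'' and ``condition (D2)'' replaced by the positivity output of Lemma~\ref{lem:d3.3}.
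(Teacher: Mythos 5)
Your proof is correct and follows essentially the same route as the paper: the paper verifies that $\{x,y\}$ and $\{\phi(x),\phi(y)\}$ satisfy (D1)--(D2), uses Lemma~\ref{lem:d3.3} to show that the rank-two root system $\Phi''_1=\langle\{r_x,r_y\}\rangle\{x,y\}$ decomposes as $\Phi''^+_1\uplus\Phi''^-_1$, and then cites Theorem~\ref{th:datum} as a black box, whereas you inline the relevant portion of that theorem's proof (the matrices $A$, $B$ and Proposition~\ref{pp:pn}) and extract $\langle x,\phi(y)\rangle\le 0$ directly from the $m=1$ case of Lemma~\ref{lem:d3.3}. The substance --- Lemma~\ref{lem:d3.3} supplying the sign conditions on the rank-two orbit, followed by the $p_n$ analysis --- is identical.
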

\begin{proof}
Observe that since $r_{\phi(x)}=r_x\neq r_y=r_{\phi(y)}$, it follows that $\{x, y\}$ and 
$\{\phi(x), \phi(y)\}$ are both linearly independent, and hence conditions (D1) and (D2) are satisfied. 
Now let us set
$R_1'' = R_2'' :=\{r_x, r_y\}$ and $W_1'' =W_2'': =\langle \{r_x, r_y \}\rangle$, 
and furthermore, $ \Phi_1'':= W_1'' \{x, y\}$.
Observe that $\Phi''_1$ consists of elements of the form $\pm(\underbrace{\cdots r_y r_x r_y}_\text{$m$ factors})x$ 
and $\pm(\underbrace{\cdots r_x r_y r_x}_\text{$m$ factors})y$ (where $0\leq m <\ord(r_x r_y)$). Lemma~\ref{lem:d3.3} then yields that 
$\Phi''_1=\Phi''^+_1 \uplus \Phi''^-_1$, and consequently
Theorem \ref{th:datum} yields that
$$
\begin{cases}
\langle x, \phi(y)\rangle \langle y, \phi(x)\rangle = \cos^2\tfrac{\pi}{n}\qquad & (n\in \N, n \geq 2)\\
\langle x, \phi(y)\rangle \langle y, \phi(x)\rangle \in [ 1, \infty) &(n=\infty).
\end{cases}
$$
\end{proof}

\section{Acknowledgments}
A few results presented in this paper are taken from the author's PhD
thesis~\cite{FU0} and the author wishes to thank A/Prof.~R.~B.~Howlett for all
his help and encouragement throughout the author's PhD candidature.  
Due gratitude must be paid to Prof.~M.~Dyer for his penetrating insight and valuable suggestions.
The author also wishes to thank Prof.~G.~I.~Lehrer and Prof.~R.~Zhang for supporting this work.

\bibliographystyle{amsplain}

\providecommand{\bysame}{\leavevmode\hbox to3em{\hrulefill}\thinspace}
\providecommand{\MR}{\relax\ifhmode\unskip\space\fi MR }
\providecommand{\MRhref}[2]{%
  \href{http://www.ams.org/mathscinet-getitem?mr=#1}{#2}
}
\providecommand{\href}[2]{#2}

\end{document}